\numberwithin{equation}{section}
\theoremstyle{plain}
\newtheorem{theorem}{Theorem}[section]
\newtheorem{proposition}[theorem]{Proposition}
\newtheorem{lemma}[theorem]{Lemma}
\theoremstyle{definition}
\newtheorem{remark}[theorem]{Remark}
\DeclareMathOperator{\supp}{supp}
\DeclareMathOperator{\e}{e}
\newcommand{\N}{\mathbb{N}}
\newcommand{\Z}{\mathbb{Z}}
\newcommand{\R}{\mathbb{R}}
\newcommand{\C}{\mathbb{C}}
\newcommand{\T}{\mathbb{T}}
\newcommand{\abs}[1]{\left\vert #1 \right\vert}
\newcommand{\parent}[1]{\mathopen{}\Big(#1\Big)}
\newcommand{\set}[1]{\mathopen{}\big\{#1\mathclose{}\big\}}
\renewcommand{\epsilon}{\varepsilon}
\begin{document}

\title[Lack of smoothing for NLS on $\mathbb{S}^2$]{The Second Picard iteration of NLS on the $2d$ sphere does not regularize Gaussian random initial data}

\author[N. Burq]{Nicolas Burq}
\address{Universit\'e Paris-Saclay, Laboratoire de Math\'ematique d’Orsay, UMR CNRS 8628, Orsay,
France, and Institut universitaire de France }
\email{nicolas.burq@universite-paris-saclay.fr}

\author[N. Camps]{Nicolas Camps}
\address{Univ Rennes, IRMAR - UMR CNRS 6625, F-35000 Rennes, France}
\email{nicolas.camps@univ-rennes.fr}

\author[M. Latocca]{Mickaël Latocca}
\address{Université d'Evry, Laboratoire de Math\'ematiques et de Mod\'elisation d'Evry, UMR CNRS 8071, Evry, France }
\email{mickael.latocca@univ-evry.fr}

\author[C. Sun]{Chenmin Sun}
\address{CNRS, Universit\'e Paris-Est Cr\'eteil,  Laboratoire d'Analyse et de Math\'ematiques Appliqu\'ees, UMR CNRS 8050, Cr\'eteil, France}
\email{chenmin.sun@cnrs.fr}

\author[N. Tzvetkov]{Nikolay Tzvetkov}
\address{Ecole Normale Sup\'erieure de Lyon, Unit\'e de Math\'ematiques Pures et Appliqu\'es,  UMR CNRS 5669, Lyon, France}
\email{nikolay.tzvetkov@ens-lyon.fr}

\date{\today}

\subjclass[2020]{35Q55, 35A01, 35R01, 35R60, 37KXX}

\begin{abstract}
We consider the Wick ordered cubic Schrödinger equation (NLS) posed on the two-dimensional sphere, with initial data distributed according to a Gaussian measure. We show that the second Picard iteration does not improve the regularity of the initial data in the scale of the classical Sobolev spaces. This is in sharp contrast with the Wick ordered NLS on the two-dimensional tori, a model for which we know from the work of Bourgain that the second Picard iteration gains one half derivative. Our proof relies on identifying a singular part of the nonlinearity. We show that this singular part is responsible for a concentration phenomenon on a large circle (i.e. a stable closed geodesic), which prevents any regularization in the second Picard iteration.
\end{abstract}

\ \vskip -1cm  \hrule \vskip 1cm \vspace{-8pt}
 \maketitle 
{ \textwidth=4cm \hrule}

\maketitle

\setcounter{tocdepth}{1}
\tableofcontents

\section{Introduction}


\subsection{Context}

\medskip

The present work is motivated by the study of the influence of the  background geometry on the low regularity well-posedness theory for nonlinear Schrödinger equations (and more generally for dispersive partial differential equations). 
Due to infinite propagation speed, even the short time nonlinear evolution is sensitive to the geometry and (at least for certain geometries based on the model case of the $2d$ sphere) high frequencies instabilities occur (see e.g. \cite{BGT2002Insta,BGT2010}). 
%
%
We prove in this work that these instabilities persist for randomized initial data, which exposes a fundamental obstruction for extending to the $2d$ sphere the probabilistic well-posedness theory for NLS  that was developed in the case of Euclidean geometries over the past thirty years.

\medskip

In the case of  integrable partial differential equations, the best results concerning low regularity well-posedness are exploiting fine properties of the Lax pair structures yielding thus results going beyond the scope of applicability of more traditional PDE techniques. Thomas Kappeler was a pioneer in the use of integrability techniques in the context of low regularity well-posedness of dispersive partial differential equations, see \cite{KT,GK,GKP}. Integrability methods will not be used in the present paper but we believe that using random data techniques in the context of integrable partial differential equations is an interesting line of research. 

\subsubsection{The nonlinear Schrödinger equation on compact surfaces}

\medskip

Given $(M,g)$ a compact Riemannian surface without boundary, the cubic Schrödinger equation (NLS) posed on $M$ reads
\begin{equation}
\tag{NLS}
\label{eq:nls}
i\partial_{t} u + \Delta_{g} u = \lambda|u|^{2}u\,,\quad (t,x)\in\R\times M\,,
\end{equation}
where $\lambda\in\R$ dictates the attractive ($\lambda<0$) or repulsive ($\lambda>0$) nature of the nonlinear interaction. When $s>1$ the Sobolev embedding and a fixed point argument easily solves the Cauchy problem associated to \eqref{eq:nls}: for every bounded set $B\subset H^{s}(M)$ there exist $T_{B}>0$ and a unique solution map
\[
\Phi: u_{0}\in B\ \longmapsto\ (t\mapsto\Phi^{t}(u_{0}))\in C([-T_{B},T_{B}]; H^{s}(M))\,. 
\]
For $u_{0}\in B$, the mapping $t\in[-T_{B},T_{B}]\mapsto \Phi^{t}(u_{0})$ solves \eqref{eq:nls} in the integrated form, through the Duhamel formula, with initial data $u_{0}$:
\begin{equation}
\label{eq:duh}
\Phi^{t}(u_{0})= \e^{it\Delta}u_{0} - i\lambda\int_{0}^{t}\e^{i(t-t')\Delta}|\Phi^{t'}(u_{0})|^{2}\Phi^{t'}(u_{0})
	\mathrm{d}t'\,.
\end{equation}
Moreover, $\Phi$ is uniformly continuous (it is actually analytic). When solving the fixed point problem \eqref{eq:duh} by a Picard iteration scheme, we write 
\begin{equation}
\label{eq:picard}
\Phi^{t}(u_{0}) 
	= \e^{it\Delta}u_{0} 
	- i\lambda\int_{0}^{t}\e^{i(t-t')\Delta}|\e^{it'\Delta}u_{0}|^{2}\e^{it'\Delta}u_{0}\, \mathrm{d}t' +\  \textit{higher order expansions.}
\end{equation}
The first nonlinear term is precisely the second Picard iteration.

\medskip
Motivated by the conservation laws associated with the equation, the main question is to determine the largest Sobolev space in which the flow map extends (uniformly) continuously. 
The $L^{p}$- properties of the eigenfunctions of $-\Delta$ play a key role in this program.

\medskip

For general $(M,g)$, the $L^{p}$ estimates on the eigenfunctions come with some derivative loss. These estimates are well-known from the works of H{\"o}rmander \cite{hormander-sp} and Sogge \cite{sogge0}, and are recalled in \eqref{eq:Sogge}. They turn out to be optimal for the $2d$ sphere. Then, motivated by the study of nonlinear waves, Burq--Gérard--Tzvetkov \cite{BGT2004} obtained semiclassical $L_{t}^{2}L_{x}^{\infty}$ Strichartz estimates with $\frac{1}{2}$-derivative loss, and extended the uniform local well-posedness up to $H^{s}(M)$ with $s>\frac{1}{2}$. So far, it is the lowest  common regularity where uniform local well-posedness is known for all $(M,g)$. Besides, in the defocusing case $(\lambda>0)$, the (coercive) conserved energy combined with the local well-posedness in $H^{1}(M)$ leads to global well-posedness in $H^{s}(M)$ for all $s\geq1$. Hani \cite{hani12} extended the range of global well-posedness in $H^{s}(M)$ with $s>\frac{2}{3}$.  

\medskip

In the Euclidean case, when $M=\mathbb{T}^{2}$ is the flat tori Bourgain \cite{Bourgain1993-0,Bourgain1993} proved that the scaling-invariance dictates the well-posedness threshold: the flow map extends uniformly continuously in $H^{s}(\mathbb{T}^{2})$ for all $s>0$ (see also the very interesting recent work \cite{HK} for the global well-posedness in this range). 
This fails when $s<0$ \cite{ChristCollianderTao2003norm,ChristCollianderTao2003Freq}, and the endpoint $s=0$ is a challenging open problem. The Strichartz estimate due to Bourgain--Demeter \cite{bourgain-demeter-15} extends the local well-posedness result for $s>0$ to the case of irrational tori.

\medskip

Outside the Euclidean case, the only fairly well-understood situation is the case of the $2d$ sphere or more generally of a Zoll surface (a surface on which the geodesic flow is periodic, see \cite{besse-zoll}). In theses geometries, the Cauchy problem is uniformly well posed \cite{BGT2005Inv} up to $s>\frac{1}{4}$ and this is optimal \cite{BGT2002Insta,Banica04Sphere}. 

\medskip

Firstly, the uniform well-posedness result above $H^{\frac{1}{4}}(\mathbb{S}^{2})$ follows from a bilinear refinement of the Strichartz estimate due to Burq--Gérard--Tzvetkov \cite{BGT2005Inv}. This bilinear estimate results from the localization of the spectrum of the Laplace operator on a Zoll surface combined with a general bilinear estimate on the spectral projectors (which is true on any surface). 

\medskip

Conversely, instabilities in $H^{s}(\mathbb{S}^{2})$ when $s\leq\frac{1}{4}$ arise from eigenfunctions that concentrate on a stable closed geodesic, such as the equator in the case of the sphere (and more generally on a surface with a closed stable geodesic \cite{thomann-08}). These eigenfunctions are the highest weight spherical harmonics, and  were also used to construct stationary solution in the defocusing case \cite{weinstein-stab}. In the present paper, we use a broader family of spherical harmonics with high weight to evidence instabilities in a probabilistic setting. 
%

\subsubsection{Statistical approaches to nonlinear waves.}

In \cite{BurqLebeau14}, Burq and Lebeau considered a natural probability measure on the space of spherical harmonics and proved that almost every random orthonormal basis is uniformly bounded in $L^{p}(\mathbb{S}^{2})$, when $p<+\infty$, contrasting with the $L^{p}$-bounds discussed above. For instance, the $n$-th highest weight spherical harmonics have their $L^{4}$-norm growing like $n^{\frac{1}{8}}$ due to concentration around a large circle. Former works \cite{vander-sphere,shiffman-zelditch,ayache-tzvetkov-08} also go in the direction of improving the $L^{p}$-estimates for generic eigenfunctions. 

\medskip

The natural question is whether this enhanced Sobolev embedding for generic functions on the sphere can lead to probabilistic well-posedness below the deterministic threshold $s=\frac{1}{4}$ or not.
 
 \medskip

When $s$ is small enough such that instabilities are known to occur in $H^{s}$ for some initial data, the probabilistic well-posedness theory initiated by Burq--Tzvetkov \cite{burq-tzvetkov-2008I,burq-tzvetkov-2008II} after the pioneering work of Bourgain \cite{bourgain96-gibbs}, consists in the construction of full-measure subsets of $H^{s}$ (for some natural measures) made of initial data that lead to strong solutions. In addition, the obtained probabilistic solution can be seen as the unique limit in $H^{s}$ of the (deterministic) flow applied to (suitable) smooth approximations of the initial datum.

\medskip

Additionally, this approach is motivated by the construction of global recurrent solutions using the Gibbs invariant measures (supported in $\cap_{\epsilon>0}H^{-\epsilon}(M)$) and, more broadly, the study of the transport of Gaussian measures by nonlinear flows. In the Euclidean setting, or more recently when the target space is a Riemannian manifold \cite{bls-23} the probabilistic approach has led to several significant advances. An overview is much beyond the scope of this introduction. Nonetheless, to motivate our main result, we recall the key mechanism driving the probabilistic method. 

\medskip

In the Euclidean case  the probabilistic decoupling between the Fourier modes of the initial datum leads to nonlinear smoothing effects. Namely, the second Picard iteration gains, say,  $\sigma$-derivatives (for some fixed $\sigma>0$) for almost-every initial data (see \cite{bourgain96-gibbs} and Theorem \ref{thm:tori}). The standard linear-nonlinear decomposition trick consists in solving the fixed-point for the re-centered solution 
\[
v(t):= \Phi^{t}(u_{0}) - \e^{it\Delta}u_{0}\,,\quad v(0)=0\,,
\]
(formally) solution to \eqref{eq:nls} with a stochastic forcing term (which corresponds to the second Picard iteration in \eqref{eq:duh}), and mixed terms depending both on $v$ and on $\e^{it\Delta}u_{0}$. By understanding the mixed terms, one may indeed run a fixed point argument for $v$ in $H^{s+\sigma}$, provided $s+\sigma$ is greater than the deterministic regularity threshold. 

\subsection{Set up and main results of the present work} The current paper is a preliminary step towards a probabilistic well-posedness theory for the cubic nonlinear Schrödinger equation posed on a non-Euclidean compact surface.  In this work, we consider randomized initial data distributed according to a Gaussian measure, and we prove that the second Picard iteration does not gain any regularity. This is in sharp 
contrast with the case of the torus, and precludes the construction of strong solutions by using the Bourgain linear-nonlinear decomposition. 
$\\$

In order to make our statement  precise, we introduce some notations. We consider the  NLS equation on $\mathbb{S}^{2}$ with a Wick ordered nonlinearity:
\begin{equation}
\tag{NLS}
\label{eq:NLS}
i\partial_{t} u - (-\Delta+1)u = :|u|^{2}u:\quad (t,x)\in\R\times\mathbb{S}^{2}\,,
\end{equation}
where $\Delta$ is the Laplace--Beltrami operator on $\mathbb{S}^{2}$ and the renormalized nonlinearity is\footnote{There is another convention of wick cubic power defined formally as $\big(|u(x)|^2-2\mathbb{E}\|u\|_{L^2(\mathbb{S}^2)}^2\big)u(x)$. As $|u(x)|^2-\mathbb{E}\|u\|_{L^2(\mathbb{S}^2)}^2$ is well-defined for almost all the initial data we shall consider later (distributed according to the Gaussian free field) these two conventions are equivalent.   }
\[
:|u|^{2}u: (x)= (|u(x)|^{2}-2\|u\|_{L^{2}(\mathbb{S}^{2})}^{2})u(x)\,,\quad x\in\mathbb{S}^{2}\,.
\]
The norm $L^{2}(\mathbb{S}^{2})$ is associated to the scalar product
\[
\langle f\mid g\rangle = \int_{\mathbb{S}^{2}}f(x)\overline{g}(x)\mathrm{d}\sigma(x)\,,
\]
where we the Lebesgue measure on $\mathbb{S}^{2}$ is normalized such that the sphere has volume~1 (see \eqref{eq:leb}).  
 
 \medskip
 
On the torus $\T^{2}$, the Wick ordering removes the nonlinear interactions that are not regularizing. Indeed, since the mass is preserved $\|u\|_{L^{2}}^{2}u$ is a linear term, which is not regularizing. It is more subtle in the case of the sphere. The reason is that the amplitudes of the eigenfunctions depend in a complicated  way  on the physical space variable $x$, whereas, on flat tori, the plane waves have a constant amplitude equals to 1. This is point is discussed in Remark \ref{rem:wick}. We stress out that if $u$ is a solution to \eqref{eq:NLS} then 
\begin{equation}
\label{eq:gauge}
v(t):=\e^{it-2it\|u\|_{L^{2}}^{2}}u(t)
\end{equation} is a solution to the standard cubic NLS
\begin{equation}
\label{eq:NLS-stand}
i\partial_{t} v+\Delta v =|v|^{2}v\,.
\end{equation}
Hence the regularity properties studied in this article also holds for the solutions of~\eqref{eq:NLS-stand} thanks to the Gauge transform \eqref{eq:gauge}. The second Picard iteration of \eqref{eq:NLS} reads 
\[
\mathcal{I}_{\mathbb{S}^{2}}(t,u_{0}) = -i\int_{0}^{t}\e^{i(t-t')(\Delta-1)}
	\big(
	:|\e^{it'(\Delta-1)}u_{0}|^{2}\e^{it'(\Delta-1)}u_{0}:
	\big)\mathrm{d}t'\,.
\]
Let us turn to the spectral properties of $-\Delta+1$, as the self-adjoint operator with domain $H^{2}(\mathbb{S}^{2})$. It has a discrete spectrum with eigenvalues 
\[
\lambda_{n}^{2}=n^{2}+n+1\,, n\in\N\,,
\] 
so that $\lambda_{n}\sim n $ at infinity. The eigenvalues are degenerate in the sense that they have multiplicity $2n+1$, and the eigenspace is spanned by the spherical harmonics of degree $n$, denoted by $(Y_{n,k})_{|k|\leq n}$. They are the restriction to $\mathbb{S}^{2}$ of the harmonic homogeneous polynomials of degree $n$. We recall some properties in Section \ref{sec:sph-har}, and prove a concentration property of many of them on a geodesic circle.  

\medskip

For $n\in\N$, we denote by $\mathcal{E}_{n}$ the eigenspace 
\[
\mathcal{E}_{n} = \ker(-\Delta+1-\lambda_{n}^{2}\operatorname{Id}) = \mathrm{span}_{\C}\{  \mathbf{b}_{n,k}  \ |\ |k|\leq n\}\,,
\]
where $(\mathbf{b}_{n,k})_{|k|\leq n}$ is a general orthonormal basis of $\mathcal{E}_{n}$. This gives a spectral resolution of $L^{2}(\mathbb{S}^{2})$
\[
L^{2}(\mathbb{S}^{2}) = \bigoplus_{n\geq 0}\mathcal{E}_{n}\,,\quad \mathcal{E}_{n}=\pi_{n}L^{2}(\mathbb{S}^{2}) 
\]
 where $\pi_{n}$ is the orthogonal projector on $\mathcal{E}_{n}$ defined by 
\[
\pi_{n} = \sum_{|k|\leq n}\langle\,\cdot\,|\mathbf{b}_{n,k}\rangle \mathbf{b}_{n,k}\,.
\]
The Sobolev norm $H^{s}(\mathbb{S}^{2})$ is equivalent to 
\[
\Big(\sum_{n\geq0}\lambda_{n}^{2s}\|\pi_{n}f\|_{L^{2}(\mathbb{S}^{2})}^{2}\Big)^{\frac{1}{2}}\,.
\]
Given $N\in\N$, 
\[
P_{\leq N} u= \sum_{n\,:\,\lambda_n \leq N} \pi_{n}u
\]
is the orthogonal projection on the space spanned by the eigenfunctions with eigenvalues $\leq N$.
$\\$

We can now define the Gaussian measures. Fix a probability space $(\Omega,\mathcal{F},\mathbb{P})$, an orthonormal basis $(\mathbf{b}_{n,k})_{n\in\N\,, |k|\leq n}$ made of eigenfunctions of $-\Delta_{\mathbb{S}^{2}}$, and i.i.d. complex standard Gaussian random variables $(g_{n,k})_{n\in\N\,,|k|\leq n}$, namely 
\[
g_{n,k}(\omega) = \frac{1}{\sqrt{2}}(\mathfrak{g}_{n,k}(\omega)+i\mathfrak{h}_{n,k}(\omega))\,,
\]
where $\mathfrak{g}_{n,k}$ and $\mathfrak{h}_{n,k}$ are independent real-valued standard Gaussian random variables on $(\Omega,\mathcal{F},\mathbb{P})$. Given $\alpha\in\R$, the Gaussian measure $\mu_{\alpha}$ is the probability measure induced by the mapping 
\begin{equation}
\label{eq:randomData}
\omega\in\Omega \longmapsto \phi_{\alpha}^{\omega} = \sum_{n\geq 0}\lambda_{n}^{-\alpha}\sum_{|k|\leq n}g_{n,k}(\omega)\mathbf{b}_{n,k}\,.
\end{equation}
The case $\alpha=1$ corresponds to the Gaussian free field, which is used to define the formally invariant Gibbs measure. We detail the properties of functions in the support of $\mu_{\alpha}$ in Section \ref{sec:randomization}. At this stage, it is important to note that the law of Gaussian measures does not depend on the choice of the orthonormal basis $(\mathbf{b}_{n,k})$. Nevertheless, to evidence some instabilities we will work with the particular basis made of spherical harmonics $(Y_{n,k})$, presented in Section \ref{sec:sph-har}. 

\medskip

Note also that the typical regularity of $\phi_{\alpha}^{\omega}$ is $H^{\alpha-1-0}(\mathbb{S}^{2})$ in the sense that for all $\epsilon>0$, 
\[
\mu_{\alpha}(H^{\alpha-1-\epsilon}(\mathbb{S}^{2}))=1\,, \quad \text{but}\quad \mu_{\alpha}(H^{\alpha-1}(\mathbb{S}^{2}))=0\,.
\] 
According to the Weyl's law, this regularity property does not depend on the surface $M$. In the particular case of the sphere, our main result is that the typical regularity of the second Picard iteration is not better than the regularity of $\phi_{\alpha}^{\omega}$. 
\begin{theorem}
\label{th:main}
Fix $t\geq0$ and $\alpha>\frac{1}{2}$. There exist $N_0\in\N$ and $\eta>0$ such that for all $N\geq N_0$\,, 
\begin{equation}
   \label{eq:FirstIterateSphere}
    \eta|t|\ln(N)^{\frac{1}{2}}
    	\leq\|\mathcal{I}_{\mathbb{S}^{2}}(t,P_{\leq N}\phi_{\alpha}^{\omega})
    	\|_{L^{2}(\Omega ; H^{\alpha-1}(\mathbb{S}^{2}))}\,.
\end{equation}
\end{theorem}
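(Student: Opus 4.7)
\medskip
\noindent\emph{Strategy.} The plan is to compute the second moment $\mathbb{E}\|\mathcal{I}_{\mathbb{S}^{2}}(t,P_{\leq N}\phi_{\alpha}^{\omega})\|_{H^{\alpha-1}(\mathbb{S}^{2})}^{2}$ by Wiener--Isserlis, isolate the contribution of the trivial resonant triples on which the Duhamel time integral produces a factor $t$ (instead of an oscillating phase), and show that this contribution alone already yields the claimed $t^{2}\log N$ lower bound. After expanding $e^{it'(\Delta-1)}P_{\leq N}\phi_{\alpha}^{\omega}$ in the basis $(b_{n,k})$ and integrating in $t'$, the $(n,k)$-th Fourier coefficient of $\mathcal{I}_{\mathbb{S}^{2}}(t)$ reads
\[
e^{-it\lambda_{n}^{2}}\sum_{(n_{j},k_{j})}^{*} c_{n_{1}}c_{n_{2}}c_{n_{3}}\, g_{n_{1},k_{1}}\bar g_{n_{2},k_{2}} g_{n_{3},k_{3}}\, I_{t}(\Phi)\,\bigl\langle b_{n_{1},k_{1}}\bar b_{n_{2},k_{2}}b_{n_{3},k_{3}}\,\mid\,b_{n,k}\bigr\rangle,
\]
where $c_{n}=\lambda_{n}^{-\alpha}$, $I_{t}(\Phi)=\int_{0}^{t}e^{it'\Phi}dt'$ with $\Phi=\lambda_{n}^{2}-\lambda_{n_{1}}^{2}+\lambda_{n_{2}}^{2}-\lambda_{n_{3}}^{2}$, and the starred sum omits the two self-contractions killed by the Wick subtraction. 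The factorization $\Phi=(n-n_{3})(n+n_{3}+1)-(n_{1}-n_{2})(n_{1}+n_{2}+1)$ shows that the only integer families of trivial resonances $\Phi=0$ surviving the Wick ordering are (I) $n_{1}=n_{2}$, $n_{3}=n$, $k_{1}\neq k_{2}$, and (II) $n_{1}=n$, $n_{2}=n_{3}$, $k_{2}\neq k_{3}$, and on both $I_{t}(\Phi)=t$.

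\medskip
\noindent\emph{Resonant lower bound via the reproducing kernel.} Restricting to family (I) with the additional condition $n_{1}\neq n$, which suppresses all cross Gaussian pairings in Wiener--Isserlis, leaves a positive diagonal lower bound
\[
\mathbb{E}\bigl\|\pi_{n}\mathcal{I}_{\mathbb{S}^{2}}(t)\bigr\|_{L^{2}}^{2}\;\gtrsim\; t^{2}\, c_{n}^{2}\sum_{\substack{n_{1}\leq N\\ n_{1}\neq n}}c_{n_{1}}^{4}\sum_{\substack{k_{1}\neq k_{2}\\ k_{3},\,k}}\bigl|\langle b_{n_{1},k_{1}}\bar b_{n_{1},k_{2}}b_{n,k_{3}}\mid b_{n,k}\rangle\bigr|^{2}.
\]
Up to a lower-order $k_{1}=k_{2}$ correction, the inner sum equals $\iint_{\mathbb{S}^{2}\times\mathbb{S}^{2}}|K_{n_{1}}(x,y)|^{2}|K_{n}(x,y)|^{2}\,dxdy$, where $K_{n}(x,y)=(2n+1)P_{n}(\cos d(x,y))$ is the reproducing kernel of $\mathcal{E}_{n}$ (by the addition formula, independently of the choice of basis). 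Rotational invariance collapses this to the Legendre bilinear integral $(2n_{1}+1)^{2}(2n+1)^{2}\int_{-1}^{1}P_{n_{1}}^{2}(u)P_{n}^{2}(u)\,du$, and the classical oscillatory asymptotics $P_{n}(\cos\theta)\sim\sqrt{2/(\pi n\sin\theta)}\cos((n+\tfrac{1}{2})\theta-\tfrac{\pi}{4})$ give the two-sided equivalent
\[
\int_{-1}^{1}P_{n_{1}}^{2}(u)P_{n}^{2}(u)\,du\;\asymp\;\frac{\log\max(n_{1},n)}{n_{1}\,n}\qquad(n_{1},n\to\infty).
\]
Combining these, weighting by $\lambda_{n}^{2(\alpha-1)}c_{n}^{2}\sim n^{-2}$, summing first in $n_{1}$ (which for $\alpha>\tfrac{1}{2}$ converges to a quantity of order $\log n$, dominated by the regime $n_{1}\sim n$), and then in $n\leq N$ yields $\sum_{n\leq N} n^{-1}\log n\gtrsim\log N$, whence the claimed lower bound on $\|\mathcal{I}_{\mathbb{S}^{2}}\|_{L^{2}(\Omega;H^{\alpha-1})}^{2}$.

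\medskip
\noindent\emph{Main obstacle.} The delicate step is the quantitative lower Legendre estimate: one needs $\int_{-1}^{1}P_{n_{1}}^{2}P_{n}^{2}\,du\gtrsim \log\max(n_{1},n)/(n_{1}n)$, not merely the matching upper bound. The logarithm is precisely the one-dimensional image of the concentration of spherical harmonics on a great circle evoked in the abstract, and isolating it requires extracting the non-oscillating part $\sim 1/(\pi^{2}n_{1}n\sin^{2}\theta)$ of $P_{n_{1}}^{2}(\cos\theta)P_{n}^{2}(\cos\theta)$, integrating it against $\sin\theta\,d\theta$ from $\theta\sim 1/\max(n_{1},n)$ to $\pi-1/\max(n_{1},n)$, and verifying that the oscillatory corrections integrate to something smaller (routine but most delicate in the dyadic regime $n_{1}\sim n$ that dominates the total). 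A parallel subtlety is to ensure that the remaining Wick--Isserlis cross pairings---the ``swap'' $k_{1}\leftrightarrow k_{2}$ inside a single Type I triple and the Type I$\times$Type II cross terms---do not destroy this positive diagonal contribution, which is where the restriction $n_{1}\neq n$ and a symmetry argument between the two resonant families come into play.
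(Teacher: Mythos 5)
Your strategy is genuinely different from the paper's: you reduce the $k$-sums via the reproducing kernel and the addition theorem to a one-dimensional Legendre integral $\int_{-1}^{1}P_{n_1}^2P_n^2\,du$, whereas the paper works in physical space with the positive-probability event $S_\epsilon$ on which $g_{1,1}$ dominates, the explicit profile $|Y_{1,1}|^2=\tfrac32\sin^2\theta$, and the concentration of high-weight $Y_{n,k}$ near the equator (Proposition~\ref{prop:concentration}). Both attacks target the same resonant family (pair at a low degree, lone factor at the output degree). However, two of your key claims are inaccurate, and the second one is a genuine gap.

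First, the stated Legendre asymptotic is wrong. One has $\int_{-1}^1P_{n_1}^2P_n^2\,du\sim n^{-1}\cdot\frac{1}{\pi}\int_0^{\pi}P_{n_1}^2(\cos\theta)\,d\theta$ for $n\gg n_1$; for $n_1$ fixed this is $\sim c(n_1)/n$ with \emph{no} logarithm (e.g.\ $\int u^2P_n^2\,du\sim\tfrac1{2n}$), and the log appears only in the balanced regime, giving $\asymp\log\min(n_1,n)/(n_1n)$, not $\log\max$. Consequently, for $\alpha>\tfrac12$ the weighted $n_1$-sum converges and is dominated by $n_1=O(1)$, not by the dyadic block $n_1\sim n$ as you assert; the $\log N$ comes purely from $\sum_{n\le N}n^{-1}$. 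These errors happen not to change the final answer, but they misidentify the ``main obstacle.''

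The substantive gap is the assertion that the $k_1=k_2$ correction is ``lower order.'' What you must lower-bound is
\[
\iint|K_n(x,y)|^2\Bigl(|K_{n_1}(x,y)|^2-\sum_{k}|b_{n_1,k}(x)|^2|b_{n_1,k}(y)|^2\Bigr)\,dx\,dy\,,
\]
and the subtracted term is of the \emph{same} order $n_1n$ as $\iint|K_n|^2|K_{n_1}|^2$. Indeed, for $n_1=1$ the $\varphi$-average of the bracket $|K_1|^2-\sum_k|Y_{1,k}(x)|^2|Y_{1,k}(y)|^2$ vanishes identically, so the residue is not governed by the crude size of $\int P_{n_1}^2P_n^2$ at all: it survives only because $|K_n|^2$ correlates with the $\varphi$-oscillation of the bracket, which is exactly a manifestation of the concentration of $Y_{n,k}$ on a great circle. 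In other words, discarding the diagonal correction hides the entire content of the paper's Proposition~\ref{prop:concentration}; the Legendre computation you propose is insensitive to the Wick subtraction and does not by itself yield a lower bound. To close the argument along your lines you would need to isolate the non-oscillating part of $|K_n(x,y)|^2$ in the $\varphi_x-\varphi_y$ variable and verify that, weighted against the bracket, it produces a definite positive quantity uniformly in $n$ -- which is essentially the concentration statement in another guise. (The rest of your proposal -- identifying the resonant families, the vanishing of the swap pairing by $\varphi$-Fourier conservation, the role of $n_1\neq n$, and the reduction to a second-moment computation -- is sound, and the $L^2(\Omega)$-orthogonality you use implicitly to isolate the resonant block matches Lemma~\ref{lem:iso}.)
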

We give some comments.
\begin{itemize}
\item The logarithmic divergence of the $H^{\alpha-1}(\mathbb{S}^{2})$-norm should be viewed as a lack of regularization for the second Picard iteration, contrasting with the case of the tori, rational or irrational (see Theorem \ref{thm:tori}). 
\item More precisely, we proved the divergence of quadratic moment of the $H^{\alpha-1}(\mathbb{S}^{2})$-norm of the second Picard iteration of the frequency truncated initial data. This is likely to yield the almost sure divergence of this random process, but we do not have a self-contained elementary proof of this result.
\item It is not clear wether the threshold $\alpha>\frac{1}{2}$ is technical or not. It appears when we prove that the fully-paired interaction are regularizing (so that they do not cancel the divergent term). We refer to \cite{DengNahmodYue1} for a discussion on a notion of probabilistic criticality, which is heuristically related with this threshold.  
\end{itemize}

Theorem \ref{th:main} indicates that the linear evolution is not a good approximation of the solution, even if the initial are randomized. In this light, the structure of the probabilistic solution (if it ever exists, in a suitable sense) associated to $\phi_{\alpha}^\omega$ cannot be 
\begin{equation}
    \label{eq:linearDecomposition}
    u(t)=e^{it(\Delta-1)}\phi_{\alpha}^\omega + \text{smoother remainder}\,.
\end{equation}
In the deterministic setting , the consideration of high-frequency limits in \cite{BGT2010} also indicates that on the $2d$ sphere, when $s\leq\frac{1}{4}$, the linear evolution of an initial data in $H^{s}(\mathbb{S}^{2})$ does not approximate well the solution. 

\medskip

While proving Theorem \ref{th:main}, we isolate a singular resonant interaction between high and low frequencies. This interaction will play a key role in the probabilistic well-posedness theory we develop in the subsequent work~\cite{BCST24}. It is now well established that adapted ansatz in the spirit of paracontrolled calculus allows one to go beyond the linear-nonlinear decomposition of Bourgain \cite{bourgain96-gibbs}. The strategy is to perform an induction on frequency scales, and to absorb the singular $high\times low$ interactions in a linear operator applied to the high-frequency part of the initial data. We refer, for instance, to the introduction of \cite{BDNY2022} for discussion on the modern technics, such as the random averaging operators \cite{DengNahmodYue1,SunTzvetkov2021derivative} and the random tensors \cite{DengNahmodYue2}. The precursors of these methods, for the wave equation, were \cite{gubinelli-koch-oh2021}  and \cite{bringmann2021-ansatz}. Finally, we stress out that in both \cite{bringmann2021-ansatz,Camps-Gassot-Ibrahim} the second Picard iteration is not smoother than the initial data.  This was rigorously proved by Oh \cite{Oh2011Szego} in the case of the Szeg\H{o} equation which is covered by the result in \cite{Camps-Gassot-Ibrahim}.  

\medskip

In a second result, proved in Appendix \ref{sec:appendix},  we propose a self-contained proof of the gain of $\frac{1}{2}$-derivative of the second Picard iteration in the case of irrational tori, for initial data distributed according to the Gaussian free field. We define
\[
\mathcal{I}_{\mathbb{T}_{\beta}^{2}}(t,u_{0}) = \int_{0}^{t}\e^{i(t-t')(\Delta_{\beta}-1)}\Big(:|\e^{it'(\Delta_{\beta}-1)}u_{0}|^{2}\e^{it'(\Delta_{\beta}-1)}u_{0}:\Big)
	\mathrm{d}t'\,,
\]
where, for $\beta>0$, the rescaled Laplacian $-\Delta_{\beta}$ is the rescaled Laplacian:
\[
-\Delta_{\beta}:=\partial_{x_{1}}^{2}+\beta^{2}\partial_{x_{2}}^{2}\,.
\]
On the torus, the Gaussian free field is induced by the random variable 
\[\phi_{1}^\omega (x) = \sum_{n\in\Z^2}\frac{1}{\langle n \rangle}g_{n}(\omega)\e^{in\cdot x}\,,
\]
where $\langle n\rangle =(1+Q(n))^{1/2}$, with for $n=(k,m)\in\Z^{2}$, $Q(n)=k^{2}+\beta^{2}m^{2}$. We have that  $\phi_{1}^{\omega}\in H^{0-}(\T_{\beta}^{2})$ for almost every $\omega$. 
\begin{theorem}[Regularity of the first iteration on tori]\label{thm:tori}
For $\alpha=1$, and for all $\epsilon>0$ there exists $C>0$ such that for all $N\in\N$ and  $t\in\R$,
 \begin{equation}
    \label{eq:FirstIterateTori}
 \|\mathcal{I}_{\T_{\beta}^{2}}(t,P_{\leq N}\phi_{1}^{\omega})\|_{L_{\omega}^{2}(\Omega ; H^{\frac{1}{2}-\epsilon}(\mathbb{T}^{2}))}
 \leq C\,.
\end{equation}
\end{theorem}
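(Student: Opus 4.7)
The plan is a Fourier-analytic computation combining the dispersive oscillation of the cubic phase with the Wick renormalisation. Writing the truncated datum as $P_{\leq N}\phi_1^\omega = \sum_{\langle n \rangle \leq N} g_n \langle n \rangle^{-1} \e^{in\cdot x}$ and substituting into the defining integral, the $n$-th Fourier coefficient of $\mathcal{I}_{\T_\beta^2}(t, P_{\leq N}\phi_1^\omega)$ rearranges as a trilinear sum over $(n_1, n_2, n_3) \in (\Z^2)^3$ with $n_1 + n_3 = n_2 + n$ and all $\langle n_j\rangle \leq N$:
\begin{equation*}
\widehat{\mathcal{I}}(t)(n) = \e^{-it(Q(n)+1)} \sum_{n_1 + n_3 = n_2 + n} \frac{:g_{n_1} \overline{g_{n_2}} g_{n_3}:}{\langle n_1 \rangle \langle n_2 \rangle \langle n_3 \rangle} \, \kappa(t, \Phi),
\end{equation*}
where $\Phi = Q(n_1) - Q(n_2) + Q(n_3) - Q(n)$ is the resonance function, $\kappa(t, \Phi) = \int_0^t \e^{-it'\Phi} dt'$, and $:\cdot:$ is Wick ordering. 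The parallelogram law for $Q$ together with $n_1 + n_3 = n_2 + n$ yields the crucial identity $\Phi = -2 \langle n_1 - n, n_3 - n \rangle_Q$ where $\langle a, c\rangle_Q := a_1 c_1 + \beta^2 a_2 c_2$. Since $\beta^2 \notin \Q$, the equation $\langle a, c\rangle_Q = 0$ on $(\Z^2 \setminus \{0\})^2$ forces $a_1 c_1 = a_2 c_2 = 0$ separately, so non-trivial resonances are confined to axis-aligned configurations $(a, c) \in (\Z_{\neq 0} \times \{0\}) \times (\{0\} \times \Z_{\neq 0})$ and its reflection.

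Next, I would take the $L^2_\omega$ expectation and apply the Isserlis formula to the cubic Wick products. Only bipartite contractions between $:g_{n_1} \overline{g_{n_2}} g_{n_3}:$ and its conjugate contribute; the two admissible pairings (identity and the $n_1 \leftrightarrow n_3$ swap, which leaves $\Phi$ invariant) produce
\begin{equation*}
\mathbb{E}\,|\widehat{\mathcal{I}}(t)(n)|^2 \lesssim \sum_{\substack{n_1 + n_3 = n_2 + n \\ \textnormal{non-trivial}}} \frac{|\kappa(t, \Phi)|^2}{\langle n_1 \rangle^2 \langle n_2 \rangle^2 \langle n_3 \rangle^2} \; + \; (\textnormal{Wick-correction contributions}),
\end{equation*}
and $|\kappa(t, \Phi)|^2 \leq \min(t^2, 4/\Phi^2)$ splits the sum into non-resonant ($\Phi \neq 0$) and resonant ($\Phi = 0$) pieces. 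The non-resonant half is where the gain of $\tfrac{1}{2} - \epsilon$ derivatives is extracted: after substituting $a = n_1 - n$ and $c = n_3 - n$, the key estimate to prove is the uniform-in-$n$ bound
\begin{equation*}
\langle n \rangle^{1-2\epsilon} \sum_{\substack{a, c \in \Z^2 \setminus \{0\} \\ \langle a, c\rangle_Q \neq 0}} \frac{1}{\langle n+a \rangle^2 \langle n+a+c \rangle^2 \langle n+c \rangle^2 \, \langle a, c\rangle_Q^2} \lesssim 1,
\end{equation*}
which follows from a dyadic decomposition $|\langle a, c\rangle_Q| \sim M$: for each $a$, the set of $c$ with $|\langle a, c\rangle_Q| \sim M$ lies in a strip of thickness $\sim M/|a|$ in the $Q$-perpendicular direction, and the sum over such $c$ is localised by the $\langle n+c \rangle^{-2}$ weight. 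Summation over $a$ and then over $n$ with the Plancherel weight $\langle n\rangle^{1-2\epsilon}$ yields a convergent total and gives the desired $H^{1/2-\epsilon}$ bound on the non-resonant part.

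Finally, the resonant part must be combined with the Wick-ordering corrections. The non-trivial axis-aligned resonances are parametrised by $(a,c) \in (\Z_{\neq 0} \times \{0\}) \times (\{0\} \times \Z_{\neq 0})$ (and its reflection); together with the $\delta_{n_1, n_2}$ and $\delta_{n_2, n_3}$ subtractions produced by the Wick normalisation, these should assemble into an expression that is uniformly bounded in $H^{1/2-\epsilon}$. The principal obstacle I foresee is exactly this bookkeeping: the resonant set is infinite, and one must carefully match the Wick subtractions against the axis-aligned resonant triples so that their potentially $t$-dependent contributions cancel, leaving a uniformly bounded remainder. The non-resonant counting is by comparison a routine dyadic computation once the identity $\Phi = -2\langle a, c\rangle_Q$ and the irrationality of $\beta^2$ have been exploited; the delicate work lies in the resonant cancellation between the deterministic Wick correction $-|a_n|^2 a_n$ and the axis-aligned triples in the remainder $R_n$.
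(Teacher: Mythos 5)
Your broad strategy (Fourier expansion, the identity $\Phi=-2\langle a,c\rangle_Q$, a dyadic lattice-count) matches the paper's, but two steps do not hold up. First, your central displayed estimate puts $\langle a,c\rangle_Q^{2}$ in the denominator. For an irrational $\beta^{2}$, the quantity $\langle a,c\rangle_Q = a_1c_1+\beta^2 a_2c_2$ is not integer-valued and takes nonzero values arbitrarily close to $0$ (indeed, for a Liouville $\beta^2$, exponentially small in $|a|,|c|$), so the factor $1/\langle a,c\rangle_Q^{2}$ is not even finite summably: the uniform-in-$n$ bound you state is false as written. The correct input is $|\kappa(t,\Phi)|^{2}\le\min(t^{2},4/\Phi^{2})\lesssim_{t}\langle\Phi\rangle^{-2}$, with Japanese brackets, and the near-resonant region $0<|\Phi|\ll1$ then has to be counted, not made to vanish. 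This is precisely what the paper's argument does: for each dyadic box $|n_j|\sim N_j$ it bounds the sum by $\sum_{l\in\Z}\langle l\rangle^{-2}\cdot\#\{|Q(n_1-n_2,n_2-n_3)-l|\le\delta\}$ and then runs a single lattice-points-in-a-strip count that treats $l=0$ (exact and near resonances) and $l\neq 0$ on the same footing, uniformly in $\beta>0$ and without invoking irrationality at all. Your dichotomy ``resonant/non-resonant'' and the appeal to $\beta^2\notin\Q$ therefore both miss where the actual difficulty sits and needlessly restrict the range of $\beta$ covered.

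Second, the anticipated ``cancellation between the Wick subtractions and the axis-aligned resonant triples'' is a misconception. The Wick ordering $:|u|^{2}u: = |u|^{2}u-2\|u\|_{L^2}^2u$ does nothing more than delete the pairings $n_2=n_1$ and $n_2=n_3$ (i.e.\ $c=0$ or $a=0$) from the trilinear sum, up to a tame extra diagonal term $\sum_m|g_m|^2g_m\langle m\rangle^{-3}e^{im\cdot x}$ which is smoother than $H^{1/2}$. After taking $L^2_\omega$ and using independence (only the identity and $n_1\leftrightarrow n_3$ Wick pairings survive), one is left with the single sum over $\Gamma=\{n=n_1-n_2+n_3,\ n_2\neq n_1,n_3\}$ weighted by $|\kappa(t,\Phi)|^{2}$; there is no residual deterministic correction to cancel against the nontrivial resonant set. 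The residual resonances ($a,c\neq0$ with $\Phi=0$, which for irrational $\beta^2$ are axis-aligned but for rational $\beta^2$ are many more) contribute a factor $|t|^{2}$, and one must simply show that they are sparse enough for the weighted sum to converge; this is subsumed in the $l=0$ case of the strip count. So the ``delicate work'' you postpone to the resonant bookkeeping is in fact the easy part, and the genuine analytic content is exactly the near-resonance counting that your estimate leaves out.
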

This result is implicitly contained  in \cite{bourgain96-gibbs} (see also \cite{fos-21}). For completeness and to put Theorem \ref{th:main} into context we give a short proof. 

\subsection*{Organization of the article} 
The proof of Theorem~\ref{th:main} in the case of the sphere is given in Section~\ref{sec:proof} and mostly relies on a quantitative concentration property, which is proven in Section~\ref{section:equator}.  We also recall or prove preliminary properties  on the spherical harmonics in Section~\ref{sec:sph-har}, and on Gaussian measures on $\mathbb{S}^{2}$ in Section \ref{sec:randomization} together with some preparations on the nonlinearity. Finally, we prove Theorem~\ref{thm:tori} in Section~\ref{sec:appendix}. 
	
\subsection*{Acknowledgments}
	The authors would like to thank the anonymous referee for the careful reading of the manuscript and for the valuable comments and suggestions. This research was supported by the European research Council (ERC) under the European Union’s Horizon 2020 research and innovation programme (Grant agreement 101097172 - GEOEDP). C.S and N.T. were partially supported by the ANR project Smooth ANR-22-CE40-0017.
	N.C. benefited from the support of the Centre Henri Lebesgue ANR-11-LABX-0020-0, the region Pays de la Loire through the project MasCan and the ANR project KEN ANR-22-CE40-0016.

\section{Spherical harmonics}
\label{sec:sph-har}
\subsection{Spherical harmonics}  The polar (colatitudinal) coordinate is $\theta\in (0,\pi)$, the azimuthal (longitudinal) coordinate is $\varphi\in(0,2\pi)$, and $\rho\in\R^+$ is the radial distance, so that $\mathbb{S}^{2}=\{x\in\R^3\mid \abs{\rho}=1\}$. 
We have
\[
(x_1,x_2,x_3)=(\rho\sin(\theta)\cos(\varphi),\rho\sin(\theta)\sin(\varphi),\rho\cos(\theta))
\,.\]
With this coordinates system, the Lebesgue measure is 
\begin{equation}
\label{eq:leb}
\mathrm{d}\sigma = \frac{1}{4\pi}\sin(\theta)\mathrm{d}\theta \mathrm{d}\varphi\,,
\end{equation}
and the Hermitian scalar product is
\begin{equation}
\label{eq:scal}
\langle f\mid g\rangle_{L^2(\mathbb{S}^{2})}=\frac{1}{4\pi}\int_0^{2\pi}\int_0^\pi f(\theta,\varphi)\overline{g(\theta,\varphi)}\sin(\theta)\mathrm{d}\theta \mathrm{d}\varphi\;.
\end{equation}
With this normalization  $\mathbb{S}^{2}$ has volume 1. Note that this coordinate system is singular on the axis $(Ox_3)$. The (shifted) Laplace-Beltrami operator on the sphere reads 
\begin{equation}
\label{eq:deltaS2}
-\Delta_{\mathbb{S}^{2}}+1 = -\frac{1}{\sin^2\theta}\frac{\partial^2}{\partial\varphi^2}-\frac{1}{\sin\theta}\frac{\partial}{\partial\theta}(\sin\theta\frac{\partial}{\partial\theta})+1\;.
\end{equation}
It is a selfadjoint operator with domain $H^{2}(\mathbb{S}^{2})$ and it has discrete spectrum. The eigenvalues are
\[
\lambda_{n}^{2}= n^{2}+n+1\,,\quad n\in\N\,,
\]
with multiplicity $2n+1$. The normalized spherical harmonics, defined as the restrictions to $\mathbb{S}^{2}$ of the harmonic homogeneous polynomials, form a particular orthonormal basis of eigenfunction. For $n\in\N$ and $k\in\{-n,\dots,n\}$, we denote by $Y_{n,k}$ the spherical harmonics of degree $n$ and order $k$.  In spherical coordinates, we have
\begin{equation}
\label{eq:sh}
Y_{n,k}(\theta,\varphi)
	=\e^{ik\varphi}v_{n,k}(\theta),\quad  v_{n,k}(\theta)=c_{n,k}L_{n,k}(\cos(\theta))\,,
\end{equation} 
where $L_{n,k}(\cos(\theta))$ is the associated Legendre function of degree $n$ and order $k$, and $c_{n,k}$ is a normalization constant:
\[
c_{n,k} = \sqrt{(2n+1)\frac{(n-k)!}{(n+k)!}}\,.
\]
\begin{proposition}
\label{prop:harmonics}
Spherical harmonics $(Y_{n,k})_{1\leq n\,,\abs{k}\leq n}$ form an orthonormal basis of $L^2(\mathbb{S}^{2})$ made of eigenfunctions of the Laplace operator on $\mathbb{S}^{2}$. They satisfy  
\[(-\Delta_{\mathbb{S}^{2}}+1)Y_{n,k} = \lambda_n^2Y_{n,k},\quad n\in\N^*,\ k\in\{-n,\dots,n\},\]
with $\lambda_n^{2} =n^{2}+n+1$. \end{proposition}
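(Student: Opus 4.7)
The plan is to establish the three parts of the statement separately: the eigenvalue equation, orthonormality, and completeness, all via classical arguments.

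First I would derive the eigenvalue equation from the definition of spherical harmonics as restrictions of harmonic homogeneous polynomials on $\R^3$. If $P\colon\R^3\to\C$ is a harmonic homogeneous polynomial of degree $n$, then in spherical coordinates $P(\rho,\theta,\varphi)=\rho^n Y(\theta,\varphi)$ with $Y$ the restriction $P\vert_{\mathbb{S}^2}$. Using the decomposition
\[
\Delta_{\R^3}=\frac{1}{\rho^2}\frac{\partial}{\partial \rho}\Big(\rho^2\frac{\partial}{\partial \rho}\Big)+\frac{1}{\rho^2}\Delta_{\mathbb{S}^2},
\]
the equation $\Delta_{\R^3}P=0$ becomes $\rho^{n-2}\big(n(n+1)Y+\Delta_{\mathbb{S}^2}Y\big)=0$, which yields $-\Delta_{\mathbb{S}^2}Y=n(n+1)Y$, hence $(-\Delta_{\mathbb{S}^2}+1)Y=\lambda_n^2 Y$ with $\lambda_n^2=n^2+n+1$. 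Applied to the specific polynomials yielding the $Y_{n,k}$ in \eqref{eq:sh}, this gives the claimed identity. One can alternatively verify the equation directly by plugging the ansatz $Y_{n,k}(\theta,\varphi)=\e^{ik\varphi}c_{n,k}L_{n,k}(\cos\theta)$ into \eqref{eq:deltaS2} and using the associated Legendre differential equation, but the harmonic polynomial route is cleaner.

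Next I would address orthogonality. Since $-\Delta_{\mathbb{S}^2}+1$ is self-adjoint with the scalar product \eqref{eq:scal}, any two spherical harmonics $Y_{n,k}$ and $Y_{n',k'}$ with $n\neq n'$ are orthogonal in $L^2(\mathbb{S}^2)$. For a fixed $n$, the factorization $Y_{n,k}=\e^{ik\varphi}v_{n,k}(\theta)$ reduces the scalar product to
\[
\langle Y_{n,k}\mid Y_{n,k'}\rangle_{L^2(\mathbb{S}^2)}=\frac{1}{4\pi}\Big(\int_0^{2\pi}\e^{i(k-k')\varphi}\mathrm{d}\varphi\Big)\int_0^\pi v_{n,k}(\theta)\overline{v_{n,k'}(\theta)}\sin\theta\,\mathrm{d}\theta,
\]
and the $\varphi$-integral vanishes unless $k=k'$. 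The normalization constant $c_{n,k}$ is precisely tuned so that the classical identity $\int_{-1}^1 L_{n,k}(x)^2\,\mathrm{d}x=\frac{2}{2n+1}\frac{(n+k)!}{(n-k)!}$ makes $\|Y_{n,k}\|_{L^2(\mathbb{S}^2)}=1$.

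Finally, for completeness in $L^2(\mathbb{S}^2)$, I would invoke the Stone--Weierstrass theorem: polynomials in the Cartesian coordinates $(x_1,x_2,x_3)$ form a dense subalgebra of $C(\mathbb{S}^2)$, hence of $L^2(\mathbb{S}^2)$. Every homogeneous polynomial of degree $m$ admits the decomposition $P_m=\sum_{j\geq 0}\abs{x}^{2j}H_{m-2j}$ with $H_{m-2j}$ a harmonic homogeneous polynomial of degree $m-2j$; restricting to $\mathbb{S}^2$ and using that $\abs{x}=1$ shows that the linear span of the $Y_{n,k}$ contains the restriction of every polynomial and is therefore dense. Combined with the orthonormality above, this exhibits $(Y_{n,k})_{n\in\N,\,\abs{k}\leq n}$ as a Hilbert basis of $L^2(\mathbb{S}^2)$.

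No step here presents a real obstacle: each is a textbook computation. The only place to be a little careful is the normalization constant $c_{n,k}$ and the choice of convention for the associated Legendre functions $L_{n,k}$, since different sources include or omit the Condon--Shortley phase $(-1)^k$; this affects neither the eigenvalue equation nor the orthonormality claim, so it may be stated as a choice without loss of generality.
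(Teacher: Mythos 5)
Your proof is correct and complete. The paper itself gives no proof of Proposition~\ref{prop:harmonics}: it simply refers to Sogge and to Stein--Weiss, Chapter~IV, treating the statement as classical. Your route is the standard one and is sound: (i) the identity $\Delta_{\R^3}=\rho^{-2}\partial_\rho(\rho^2\partial_\rho)+\rho^{-2}\Delta_{\mathbb{S}^2}$ applied to $P=\rho^n Y$ with $\Delta_{\R^3}P=0$ gives $-\Delta_{\mathbb{S}^2}Y=n(n+1)Y$, hence $(-\Delta_{\mathbb{S}^2}+1)Y=(n^2+n+1)Y=\lambda_n^2Y$; (ii) self-adjointness handles $n\neq n'$, the $\varphi$-integral handles $k\neq k'$, and $c_{n,k}$ is exactly the constant that turns the classical Legendre integral $\int_{-1}^1 L_{n,k}^2\,\mathrm dx=\tfrac{2}{2n+1}\tfrac{(n+k)!}{(n-k)!}$ into $\|Y_{n,k}\|_{L^2}=1$ under the paper's normalized measure $\tfrac{1}{4\pi}\sin\theta\,\mathrm d\theta\,\mathrm d\varphi$; (iii) completeness via Stone--Weierstrass together with the decomposition $\mathcal{P}_m=\mathcal{H}_m\oplus|x|^2\mathcal{P}_{m-2}$, iterated, is the textbook argument. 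Your caveat about the Condon--Shortley phase is a reasonable clarification and indeed affects neither claim. Since the paper provides only a citation, there is no alternative paper argument to contrast with; your write-up is essentially the canonical reconstruction of the cited classical material.
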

We refer to \cite{sogge0} and \cite{steinEuclidean}, Chapter IV,  for a detailed analysis of the spherical harmonics.  

\subsection{Concentration of spherical harmonics with high order}
\label{section:equator}
It follows from the expression of the Laplace operator on the sphere~\eqref{eq:deltaS2} that $v_{n,k}$ is solution on $(0,\pi)$ to 
\begin{equation}
\label{eq:v-theta}
-(\sin(\theta)\frac{\mathrm{d}}{\mathrm{d}\theta})^2v_{n,k}(\theta) + (k^2- n(n+1)\sin^2(\theta))v_{n,k}(\theta) = 0.
\end{equation}

The next Proposition claims that in the high frequency regime $n\to\infty$ a large family of  spherical harmonics with high weights (when $|k|$ is close to $n$) concentrate their mass near the equator (corresponding to the region $\theta=\frac{\pi}{2}$). Given $\delta\in(0,1)$, we denote
\begin{equation}
\label{eq:Cdelta}
\mathscr{C}_{\delta} := \Big\{x=(\theta,\varphi)\in\mathbb{S}^{2}\quad |\quad  \delta<|\cos(\theta)|\Big\}\,.
\end{equation}
\begin{proposition}
\label{prop:concentration}
For all $\delta>0$, $n\geq1$ and $k\in\Z$ such that
\begin{equation}
\label{eq:k}
n(n+1)(1-\delta^2)\leq k^{2}\leq n^{2}\,,
\end{equation}
we have
\[
\|\mathbf{1}_{\mathscr{C}_{2\delta}}(x)Y_{n,k}(x)\|_{L_{x}^2(\mathbb{S}^{2})}\lesssim_{\delta}\frac{1}{n}\,.
\]
\end{proposition}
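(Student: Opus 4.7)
The plan is to exploit the Sturm--Liouville structure of the ODE \eqref{eq:v-theta} satisfied by $v_{n,k}$ on $(0,\pi)$, and to show that the annular region $\mathscr{C}_\delta = \{|\cos\theta|>\delta\}$ is \emph{classically forbidden} under the hypothesis $k^2 \geq n(n+1)(1-\delta^2)$. Since $|Y_{n,k}(\theta,\varphi)|^2 = v_{n,k}(\theta)^2$ does not depend on $\varphi$, the spherical integration reduces the proposition to proving
\[
\int_{\{|\cos\theta|>2\delta\}} v_{n,k}(\theta)^2 \sin\theta\,d\theta \lesssim_\delta \frac{1}{n^2}.
\]
Dividing \eqref{eq:v-theta} by $\sin\theta$ puts it in Sturm--Liouville form
\[
-\frac{d}{d\theta}\bigl(\sin\theta\, v_{n,k}'\bigr) + W(\theta)\, v_{n,k} = 0, \qquad W(\theta) := \frac{k^{2} - n(n+1)\sin^{2}\theta}{\sin\theta},
\]
and the assumption on $k$ gives $W(\theta) \geq n(n+1)(a^{2} - \delta^{2})/\sin\theta$ on $\mathscr{C}_a$ for every $a > \delta$.

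The main step is an Agmon-type multiplier argument. I would fix a smooth cutoff $\chi=\chi(\theta)$ equal to $1$ on $\mathscr{C}_{2\delta}$ and vanishing outside $\mathscr{C}_{a}$ with, say, $a=\tfrac{3}{2}\delta$ (assuming without loss of generality $\delta<\tfrac12$, otherwise $\mathscr{C}_{2\delta}=\emptyset$ and the statement is vacuous), multiply the Sturm--Liouville equation by $\chi^{2} v_{n,k}$, and integrate over $(0,\pi)$ against $d\theta$. A single integration by parts yields
\[
\int_{0}^{\pi} \sin\theta\,(v_{n,k}')^{2} \chi^{2}\, d\theta + \int_{0}^{\pi} W(\theta)\, v_{n,k}^{2} \chi^{2}\, d\theta = -2\int_{0}^{\pi} \sin\theta\, \chi \chi'\, v_{n,k}\, v_{n,k}'\, d\theta,
\]
the boundary terms vanishing because $\sin\theta\, v_{n,k}'(\theta)\to 0$ at $\theta=0,\pi$ (a consequence of $Y_{n,k}$ being smooth on $\mathbb{S}^2$, so that the coordinate singularity is absorbed by the factor $\sin\theta$).

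Applying Young's inequality to absorb a fraction of the $(v_{n,k}')^{2}$-term from the right-hand side into the first term on the left, what remains gives
\[
\int_{0}^{\pi} W(\theta)\, v_{n,k}^{2} \chi^{2}\, d\theta \lesssim \int_{0}^{\pi} \sin\theta\, (\chi')^{2}\, v_{n,k}^{2}\, d\theta.
\]
On the support of $\chi$ one has $W(\theta) \gtrsim_{\delta} n^{2}$ (with the implicit constant $\sim a^{2}-\delta^{2}$), and $\chi\equiv 1$ on $\mathscr{C}_{2\delta}$, so the left-hand side dominates $c_{\delta} n^{2} \int_{\mathscr{C}_{2\delta}} v_{n,k}^{2}\sin\theta\,d\theta$. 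For the right-hand side, the support of $\chi'$ lies in $\{a \leq |\cos\theta|\leq 2\delta\}$, a $\theta$-interval of length $\lesssim \delta$ on which $\sin\theta$ is bounded, so $\|\chi'\|_{\infty}\lesssim \delta^{-1}$; combined with the $L^{2}(\mathbb{S}^{2})$-normalization $\int_{0}^{\pi} v_{n,k}^{2} \sin\theta\,d\theta = 2$, this gives a bound by $C_\delta$. Dividing by $c_\delta n^2$ and taking the square root concludes. The one step that requires care is the boundary-term analysis at the poles, which genuinely uses the smoothness of the spherical harmonic on $\mathbb{S}^2$; once this is settled the remainder is a robust weighted energy estimate.
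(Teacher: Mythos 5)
Your proof is correct, and it takes a genuinely different — and more elementary — route than the paper's. The paper first makes the change of variable $y=\arctanh(\cos\theta)$, rewrites \eqref{eq:v-theta} as a semiclassical equation $\operatorname{P}_\alpha(y,hD_y)\widetilde v_{n,k}=\bigo{h^2}$ with $h\sim 1/n$ and $\alpha=|k|h$, and then builds an explicit pseudodifferential parametrix $\operatorname{Q}_\alpha$ for $\operatorname{P}_\alpha$ in the elliptic region $\widetilde{\mathscr{C}}_{2\delta}$, controlling both $\operatorname{Q}_\alpha$ and the remainder $\operatorname{Q}_\alpha\operatorname{P}_\alpha-\chi$ via kernel estimates and Schur's test. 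You instead stay in the $\theta$-variable, put \eqref{eq:v-theta} in self-adjoint Sturm--Liouville form $-(\sin\theta\,v')'+Wv=0$ with $W=(k^2-n(n+1)\sin^2\theta)/\sin\theta$, observe that under \eqref{eq:k} one has $W\gtrsim_\delta n^2$ uniformly on $\mathscr{C}_{2\delta}$, and run a single Caccioppoli/Agmon multiplier estimate with the cutoff $\chi^2 v_{n,k}$. The computations check out: the boundary terms at the poles vanish because $v_{n,k}$ (hence also $\sin\theta\,v_{n,k}'$) is smooth and vanishes there, the singular weight $W\sim k^2/\sin\theta$ near the poles is harmless since $v_{n,k}\sim\sin^{|k|}\theta$ with $|k|\geq 2$ once $\delta<\tfrac12$, and on $\supp\chi\subset\mathscr{C}_{3\delta/2}$ the potential $W$ is still positive so nothing is lost when restricting the left-hand integral to $\mathscr{C}_{2\delta}$. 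Both methods exploit the same ellipticity of $\operatorname{P}_\alpha$ in the forbidden region, but yours avoids pseudodifferential machinery entirely and only uses the global $L^2$-normalization $\int_0^\pi v_{n,k}^2\sin\theta\,d\theta=2$. The paper's semiclassical framework is more flexible (it would survive in situations without a separable ODE and gives microlocal information beyond the $L^2$ bound), while your argument is shorter and self-contained for the specific estimate claimed in Proposition~\ref{prop:concentration}.
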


The proof relies on utilizing semiclassical functional calculus in the high-frequency regime to quantitatively exploit the ellipticity of the equation \eqref{eq:v-theta} away from the equator. A different approach, based on ordinary differential equations, can be found in~\cite[Section 3]{FS17}, where families of spherical harmonics with high weight $|k|\sim n$ are used to saturate certain $L^{p}$ spectral cluster bounds.

\begin{proof} 
In order to use  semiclassical functional calculus and prove this proposition, we shall extend $v_{n,k}$ to a function defined on the whole real line $\R$. For this purpose, we make the change of variable
\[
\begin{array}{ccccc}f& \colon &(0,\pi)&\longrightarrow &\R \\&&&&\\& &\theta&\longmapsto & \tanh^{-1}(\cos(\theta))\,.\\\end{array}
\]
Note that $f$ is a $C^\infty$-diffeomorphism and that we have the identity: for all $\theta\in(0,\pi)$,
\[
    \sin^2(\theta) = 1-\cos^2(\theta) = 1- \tanh^2(f(\theta)) = \frac{1}{(\cosh\circ f(\theta))^{2}}\,.
\]
This yields 
\begin{equation}
    \label{eq-diffeo}
    f'(\theta) = -\frac{1}{\sin(\theta)}\,,\quad (f^{-1})'(y) = -\frac{1}{\cosh(y)} \,,
\end{equation}
which in turn implies that 
\begin{equation}
\label{eq-L2-v}
   \int_{-\infty}^\infty \frac{1}{\cosh(y)^{2}}|v_{n,k}\circ f^{-1}(y))|^{2}\mathrm{d}y =
    \int_{0}^\pi|v_{n,k}(\theta)|^2\sin(\theta)\mathrm{d}\theta=\frac{1}{2\pi} \,. 
\end{equation}
We set
\[
    \widetilde v_{n,k}(y) =\frac{(2\pi)^{\frac{1}{2}}}{\cosh(y)}v_{n,k}\circ f^{-1}(y) \,,\quad y\in\R\,,
\]
so that 
\[
\int_{-\infty}^{\infty}|\widetilde{v}_{n,k}(y)|^{2}dy=1\,.
\]
In what follows we may abuse notations and write $\theta=f^{-1}(y)$ for $y\in\R$. We have from the chain rule and from \eqref{eq-diffeo} that 
\begin{equation}
\label{eq:dy}
\frac{\mathrm{d}}{\mathrm{d}y}\widetilde{v}_{n,k}(y) = - \tanh(y)\widetilde{v}_{n,k}(y)-\frac{(2\pi)^{\frac{1}{2}}}{\cosh(y)}\sin(\theta)\frac{\mathrm{d}}{\mathrm{d}\theta}v_{n,k}(\theta)\,.
\end{equation}
We deduce from this that 
\begin{equation}
\label{eq:est-dv}
\|\frac{\mathrm{d}}{\mathrm{d}y}\widetilde{v}_{n,k}\|_{L^{2}(\R)}\lesssim1\,.
\end{equation}
Indeed, 
\[
\|\tanh\widetilde{v}_{n,k}\|_{L^{2}(\R)}\leq \|\widetilde{v}_{n,k}\|_{L^{2}(\R)}\leq1\,,
\]
and 
\begin{align*}
\Big\|\frac{1}{\cosh(y)}\sin(\theta)\frac{\mathrm{d}}{\mathrm{d}\theta}v_{n,k}(\theta)\Big\|_{L^{2}(\R)}^{2} 
	&= \int_{\R} \sin^{2}(\theta)\frac{1}{\cosh(y)}|v_{n,k}'\circ f^{-1}(y)|^{2} |(f^{-1})'(y)|\mathrm{d}y \\
	&=\int_{0}^{\pi}\sin^{3}(\theta)|v_{n,k}'(\theta)|^{2}d\theta \lesssim 1\,.
\end{align*}
Moreover, we deduce from \eqref{eq:dy} that
\begin{multline*}
\frac{\mathrm{d}^{2}}{\mathrm{d}y^{2}}\widetilde{v}_{n,k}(y) 
	= (\tanh^{2}(y)-1)\widetilde{v}_{n,k}(y) - \tanh(y)\frac{\mathrm{d}}{\mathrm{d}y}\widetilde{v}_{n,k}(y)\\  
	+\frac{(2\pi)^{\frac{1}{2}}}{\cosh(y)}\Big(2\cos(\theta)\sin(\theta)\frac{\mathrm{d}}{\mathrm{d}\theta}v_{n,k}(\theta) 
	+\sin^{2}(\theta)\frac{\mathrm{d}^{2}}{\mathrm{d}\theta^{2}}v_{n,k}(\theta)\Big)\,.
\end{multline*}
Plugging in the expression \eqref{eq:dy}, we deduce that 

\[
\frac{\mathrm{d}^{2}}{\mathrm{d}y^{2}}\widetilde{v}_{n,k}(y) 
	= -(1+2\cos(\theta)\frac{\mathrm{d}}{\mathrm{d}y})\widetilde{v}_{n,k}(y) + \frac{(2\pi)^{\frac{1}{2}}}{\cosh(y)}(\sin(\theta)\frac{\mathrm{d}}{\mathrm{d}\theta})^{2}v_{n,k}(\theta)\,.
\]
Therefore, we conclude form \eqref{eq:v-theta} that $\widetilde v_{n,k}\in C^\infty(\R)$ solves
\begin{equation}
    \label{eq:widetildev}
    \frac{\mathrm{d}^2}{\mathrm{d}y^2}\widetilde{v}_{n,k}+(1+2\cos(\theta)\frac{\mathrm{d}}{\mathrm{d}y})\widetilde{v}_{n,k}-(k^2-n(n+1)\sin^{2}(\theta))\widetilde{v}_{n,k}=0\,. 
\end{equation}

We can now introduce the semiclassical parameter in the high-frequency regime $n\to\infty$:
\[
h:= (n(n+1))^{-\frac{1}{2}}\,,\quad h\sim_{n\to\infty} n^{-1}\,.
\]
For the rest of the proof, we denote
\[
\alpha:=|k|h\,. 
\]
This parameter should not be confused with the parameter $\alpha$ in the definition \eqref{eq:randomData} of the initial data, which is not involved in this proof. Multiplying the equation~\eqref{eq:widetildev} by~$h^2$ gives
\begin{equation}
    \label{eq:v-h}
-h^{2}\frac{\mathrm{d}^{2}}{\mathrm{d}y^{2}}\widetilde{v}_{n,k}(y) + (\alpha^{2}-\sin^{2}(\theta(y)))\widetilde{v}_{n,k}(y)  - 2h^{2}\cos(\theta(y))\frac{\mathrm{d}}{\mathrm{d}y}\widetilde{v}_{n,k}(y) - h^{2}=0\,.
\end{equation}
We reformulate \eqref{eq:v-h}
as follows:
\[
\eqref{eq:v-h}\quad \iff\quad  \operatorname{P}_{\alpha}(y,hD_{y})(\widetilde{v}_{n,k})=2h^{2}\cos(\theta(y))\frac{\mathrm{d}}{\mathrm{d}y}\widetilde{v}_{n,k}(y)+h^{2}\,,
\]
where $\operatorname{P}_{\alpha}$ is a differential operator of order 2 with symbol  
\[
   p_{\alpha}(y,\xi) =\xi^2+\alpha^2-\sin^{2}(\theta(y))\,.
\]
We deduce from \eqref{eq:est-dv} that 
\begin{equation}
\label{eq:est-P}
\|\operatorname{P}_{\alpha}(y,hD_{y})\widetilde{v}_{n,k}\|_{L^{2}(\R)}\lesssim h^{2}\,.
\end{equation}
Consider 
\[
\operatorname{Char}(\operatorname{P}_{\alpha}) 
	= \Big\{(y,\xi)\in\R^2\ \mid\ p_{\alpha}(y,\xi)=0\Big\}
	= \Big\{(y,\xi)\in\R^{2}\ \mid\ \xi^2+\alpha^2-\sin^{2}(\theta(y))=0\Big\}\,, 
\]
and set, for $\delta\in(0,1)$, 
\begin{equation}
\label{eq:C}
\widetilde{\mathscr{C}_{\delta}} := \Big\{y\in\R\,\ |\ \delta<|\cos(\theta(y))|\Big\}\,.
\end{equation}
Observe that for all $\delta\in(0,1)$, 
\[
\sqrt{1-\delta^2}\leq \alpha\quad \implies \quad \operatorname{Char}(\operatorname{P}_{\alpha}) 
	 \subset\R\setminus\widetilde{\mathscr{C}_{\delta}}\,. 
\] 
Indeed, 
\[
\begin{cases}
	&\sqrt{1-\delta^2}\leq \alpha\,,\\
	 &\xi^2+\alpha^2-\sin^{2}(\theta) =  0
\end{cases}
	\quad \implies\quad 1-\delta^{2}\leq \sin^{2}(\theta(y))\quad \implies\quad |\cos(\theta(y))| \leq \delta\,.
\]
Hence, in the regime where $\sqrt{1-\delta^{2}}\leq \alpha $,  $\operatorname{Char}(\operatorname{P}_{\alpha})$ concentrates near the equator as $\delta$ goes to 0. By the use of semiclassical functional calculus, we will deduce from this that $\widetilde v_{n,k}$ also concentrates its mass near the equator in this regime. 
 
\begin{lemma}
\label{lemma-ellipticity}
Let $\delta\in(0,1)$, and suppose that $\sqrt{1-\delta^{2}}\leq\alpha$. Then, for all $\chi\in C_{c}^\infty(\R)$ such that
\[
\chi\equiv1\ on\ \widetilde{\mathscr{C}}_{2\delta}\,,\quad \supp\chi\subset \widetilde{\mathscr{C}}_{\frac{3}{2}\delta}\,,
\] 

where $\widetilde{\mathscr{C}}_{\delta}$ is defined in \eqref{eq:C}, there exists a bounded operator $\operatorname{Q}_{\alpha,h}$ such that for all $h\in(0,1)$, 
\[
\|\operatorname{Q}_{\alpha,h}\|_{L^{2}(\R)\to L^{2}(\R)}\lesssim_{\delta}1\,,
\]
and 
\begin{equation}
    \|\operatorname{Q}_{\alpha,h}\circ \operatorname{P}_{\alpha}(y,hD_{y})-\chi(y)\|_{L^2(\R)\rightarrow L^2(\R)}\lesssim_{\delta} h\,. 
\end{equation}
\end{lemma}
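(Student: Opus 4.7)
The strategy is to construct $\operatorname{Q}_{\alpha,h}$ as a standard semiclassical left parametrix of $\operatorname{P}_{\alpha}(y,hD_y)$, cut off by $\chi$. Everything rests on an ellipticity bound for $p_\alpha$ on $\supp\chi$. Since $\supp\chi\subset\widetilde{\mathscr{C}}_{\frac{3}{2}\delta}$, for $y\in\supp\chi$ we have $|\cos(\theta(y))|\geq \tfrac{3}{2}\delta$, hence $\sin^{2}(\theta(y))\leq 1-\tfrac{9}{4}\delta^{2}$. The hypothesis $\alpha\geq \sqrt{1-\delta^{2}}$ then yields $\alpha^{2}-\sin^{2}(\theta(y))\geq \tfrac{5}{4}\delta^{2}$, so
\[
p_\alpha(y,\xi)=\xi^{2}+\alpha^{2}-\sin^{2}(\theta(y))\gtrsim_{\delta}1+\xi^{2}\qquad\text{for }y\in\supp\chi.
\]
Moreover, $|k|\leq n$ forces $\alpha=|k|h\in[\sqrt{1-\delta^{2}},1]$, a compact range on which $p_\alpha$ depends smoothly.

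I would then set
\[
q_{\alpha}(y,\xi):=\frac{\chi(y)}{p_\alpha(y,\xi)},\qquad \operatorname{Q}_{\alpha,h}:=q_\alpha(y,hD_y).
\]
Thanks to the lower bound above, $q_\alpha$ is smooth on $\R^{2}$ and satisfies classical symbol estimates of order $-2$:
\[
\big|\partial_{y}^{\beta}\partial_{\xi}^{\gamma}q_{\alpha}(y,\xi)\big|\leq C_{\beta,\gamma,\delta}\,(1+\xi^{2})^{-1-\frac{|\gamma|}{2}},
\]
uniformly in $\alpha$, because $p_\alpha$ depends polynomially on $\alpha$ with $\alpha$ in a compact set, and because both $\theta\circ f^{-1}$ and all its derivatives are bounded on $\R$. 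By the Calder\'on--Vaillancourt theorem, $\|\operatorname{Q}_{\alpha,h}\|_{L^{2}\to L^{2}}$ is then controlled by finitely many of the above seminorms, hence $\lesssim_{\delta}1$ uniformly in $h\in(0,1]$ and $\alpha$.

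To finish, the semiclassical composition formula applied to $q_\alpha$ (order $-2$) and $p_\alpha$ (order $2$) gives
\[
\operatorname{Q}_{\alpha,h}\circ\operatorname{P}_{\alpha}(y,hD_y)=(q_\alpha\cdot p_\alpha)(y,hD_y)+h\,R_{\alpha,h},
\]
where $R_{\alpha,h}$ has a symbol of order $-1$ with seminorms bounded uniformly in $(\alpha,h)$. Since $q_{\alpha}\cdot p_\alpha=\chi(y)$ and the quantization of $\chi(y)$ is simply multiplication by $\chi$, one more application of Calder\'on--Vaillancourt yields the claimed $O_{\delta}(h)$ bound. The main technical point I anticipate is tracking the uniformity in $\alpha$ at each step; this is not fundamentally hard but needs care because $\sin^{2}(\theta(y))$ is bounded on $\R$ without decaying at infinity, so one must work in a uniform-in-$y$ symbol class and verify that the standard parametrix construction and its remainder estimates go through with constants depending only on finitely many $L^{\infty}_{y}$-norms of derivatives of $\sin^{2}(\theta(y))$ and on $\delta$.
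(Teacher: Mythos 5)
Your proposal is correct and uses exactly the same parametrix symbol $q_\alpha=\chi/p_\alpha$ as the paper, with the same ellipticity bound on $\supp\chi$ (you even get the sharper constant $\tfrac54\delta^2$ where the paper records only $\delta^2$). The difference lies in the tools used to turn the symbol into an operator estimate: you invoke the semiclassical Calder\'on--Vaillancourt theorem for $L^2$-boundedness and the semiclassical composition calculus for the remainder, while the paper deliberately avoids these black boxes and proceeds by hand --- writing the Schwartz kernels of $\operatorname{Q}_{\alpha,h}$ and of $\operatorname{Q}_{\alpha,h}\circ\operatorname{P}_\alpha - \chi$, integrating by parts to get pointwise kernel bounds of the form $\min(h^{-1},h|y-z|^{-2})$ and $\min(1,h^2|y-z|^{-2})$, and concluding by Schur's test. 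The paper explicitly states that it does this to remain self-contained, since only a one-term parametrix is needed. Your route is shorter but less elementary and hides the quantitative dependence on $\delta$ (the paper gets $\lesssim\delta^{-4}$ and $\lesssim\delta^{-6}h$ explicitly); both approaches are valid. Two minor quibbles: your estimate that the composition remainder has order $-1$ is correct but not sharp --- since $p_\alpha$ is quadratic in $\xi$ with $\xi$-independent lower order terms, $\partial_y p_\alpha$ has order $0$ and the remainder symbol is actually of order $-3$, and the expansion even terminates exactly because $p_\alpha$ is a polynomial in $\xi$; and the observation about $\alpha$ lying in a compact interval is not really needed, since only the lower bound $\alpha\geq\sqrt{1-\delta^2}$ enters the ellipticity estimate and the lemma is stated for a fixed $\alpha$.
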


\begin{proof}
Fix $\delta\in(0,1)$ and suppose that $\sqrt{1-\delta^{2}}\leq\alpha$. The key point is that $\operatorname{P}_{\alpha}(y,hD_{y})$ is elliptic in the region $\widetilde{\mathscr{C}}_{2\delta}$ and the proof follows from the standard parametrix construction of elliptic operators. Since we only need to invert $\operatorname{P}_{\alpha}(y,hD_{y})$ in the elliptic region up to order 1 in $h$, we propose a self-contained proof. 

\medskip

We introduce the symbol
\[
    q(y,\xi;\alpha):=\frac{\chi(y)}{p_{\alpha}(y,\xi)}\,.
\]
Recall that for all $y\in\supp(\chi)$, $\xi\in\R$ and $\sqrt{1-\delta^2}\leq\alpha$ we have
\begin{equation}
\begin{split}
\label{eq:p0}
p_{\alpha}(y,\xi)
	&=\xi^2+\alpha^2-\sin^{2}(\theta(y))\\
	&\geq 1 - \delta^{2} -  \sin^{2}(\theta(y)) \\ 
	&= \cos^{2}(\theta(y))-\delta^{2}\\
	&\geq \delta^{2}\,.
\end{split}
\end{equation}
The semiclassical pseudodifferential operator $\operatorname{Q}_{\alpha}(y,hD_{y})$ associated with $q$ is defined via its Schwartz kernel 
\[
    \operatorname{Q}_{\alpha}(y,hD_{y})[f](y)=\int_\R K(y,z)f(z)\mathrm{d}z,\quad K(y,z) =\frac{1}{2\pi h}\int_\R\e^{\frac{i(y-z)\xi}{h}}
    q(y,\xi;\alpha)\mathrm{d}\xi\,.
\]

Observe that   
\begin{equation}
\label{eq:phase}
(y-z)e^{\frac{i(y-z)\xi}{h}}=-ih\partial_{\xi}(e^{\frac{i(y-z)\xi}{h}})\;.
\end{equation}
Hence, integrating by parts twice yields that 
\[
(y-z)^2K(y,z)
	=-\frac{h}{2\pi}\int_\R \e^{\frac{i(y-z)\xi}{h}} \partial_\xi^2 q(y,\xi;\alpha)\mathrm{d}\xi\,,
\]
It follows from~\eqref{eq:p0} that for all $k\in\N$, $\alpha,\delta$ such that $\sqrt{1-\delta^2}<\alpha\leq1$ and for all $\xi\in\R$,
\begin{equation}
\label{eq:q-bound}
\sup_{y\in\R}\ |\partial_\xi^k q(y,\xi,\alpha)|
	\lesssim_{k} \delta^{-2k}\langle\xi\rangle^{-2k}.
\end{equation}
Hence,
\[
\sup_{(y,z)\in\R^{2}}(y-z)^{2}|K(y,z)|
	\lesssim h\delta^{-4}
	\parent{\int_\R \langle\xi\rangle^{-4}\mathrm{d}\xi}\lesssim h\delta^{-4}\,.
\]
Moreover, we have the trivial bound
\[
\sup_{(y,z)\in\R^{2}} |K(y,z)|
	\leq h^{-1}\delta^{-2}\parent{\int_\R\langle\xi \rangle^{-2}\mathrm{d}\xi}
	\lesssim h^{-1}\delta^{-4}\,.
\]
Therefore,
\begin{equation*}
\begin{split}
\sup_{z\in\R} \int_\R|K(y,z)|\mathrm{d}y
	&\lesssim \delta^{-4} \sup_{z\in\R}\int_\R\min\Big(\frac{h}{(y-z)^2},\frac{1}{h}\Big)\mathrm{d}y\\
	&\lesssim\delta^{-4}\Big(\int_{|s|\leq h}\frac{1}{h}\mathrm{d}s + \int_{|s|>h}\frac{h}{s^{2}}\mathrm{d}s\Big) \\
	&\lesssim\delta^{-4}\,.
\end{split}
\end{equation*}
Since the computations are the same when changing the role of $y$ and $z$, we prove that when $\sqrt{1-\delta^{2}}\leq\alpha$ then 
\[
\sup_{y\in\R}\int_\R|K(y,z)|\mathrm{d}z+\sup_{z\in\R}\int_\R|K(y,z)|\mathrm{d}y\lesssim \delta^{-4}\,,
\]
uniformly in $h\in(0,1)$. It follows from Schur's test that $\operatorname{Q}_{\alpha}(y,hD_{y})$ is bounded on $L^2(\R)$, uniformly in $h\in(0,1)$, with 
\begin{equation}
\label{eq:L2-bound}
\|\operatorname{Q}_{\alpha}(y,hD_{y})\|_{L^2(\R)\to L^2(\R)}\lesssim \delta^{-4}\,.
\end{equation}
The Schwartz kernel of the operator
\[
\operatorname{R}_{\alpha}(y,hD_{y})\coloneqq \operatorname{Q}_{\alpha}(y,hD_{y})\circ \operatorname{P}_{\alpha}(y,hD_{y})-\chi(y)
\]
is given by 
\[
  \mathcal{K}(y,z)=(\phi(z)-\phi(y))\chi(y)\int_{\R}\e^{\frac{i(y-z)\xi}{h}}\frac{1}{p_{\alpha}(y,\xi)}\frac{\mathrm{d}\xi}{2\pi h} \,.
\]
where 
\[
\phi(y) := \sin^{2}(\theta(y))\,.
\]
We write
\[
\varphi(y,z):=\int_0^1\phi'(tz+(1-t)y)\mathrm{d}t\,,
\]
so that 
\[
\phi(y)-\phi(z) = (y-z)\varphi(y,z)\,.
\]
It follows from \eqref{eq:phase} and integration by parts that
\[
    \mathcal{K}(y,z)=-i\varphi(y,z)\chi(y)\int_{\R}\partial_{\xi}\big(\frac{1}{p_{\alpha}(y,\xi)}\big)
    \e^{\frac{i(y-z)\xi}{h}}\frac{\mathrm{d}\xi}{2\pi}\,. 
\]
We proceed as for the estimate on the kernel of $\operatorname{Q}_{\alpha}$ and and we integrate by parts twice (using \eqref{eq:phase}) to get
\[ 
    |\mathcal{K}(y,z)|\lesssim\delta^{-6}\min(1, \frac{h^2}{|y-z|^2})\,. 
\]
We deduce
\[
    \sup_{z\in\R}\int_{\R}|\mathcal{K}(y,z)|\mathrm{d}y+\sup_{y\in\R}\int_{\R}|\mathcal{K}(y,z)|\mathrm{d}z
    \lesssim\delta^{-6} h\,
\]
and we conclude from the Schur's test that 
\[
\|\mathrm{R}_{\alpha}(y,hD_{y})\|_{L^2\rightarrow L^2}\lesssim\delta^{-6} h
\,.
\]
This concludes the proof of Lemma~\ref{lemma-ellipticity} with an implicit constant $\lesssim\delta^{-6}$.
\end{proof}

We now complete the proof of Proposition~\ref{prop:concentration} as follows. We have from Lemma~\ref{lemma-ellipticity}, from~\eqref{eq:L2-bound} and from~\eqref{eq:est-P} that 
\[
\|\chi\widetilde{v}_{n,k}\|_{L^2(\R)}=\|\mathrm{Q}_{\alpha}(y,hD_{y})\mathrm{P}_{\alpha}(y,hD_{y})\widetilde{v}_{n,k}-\mathrm{R}_{\alpha}(y,hD_{y})\widetilde{v}_{n,k}\|_{L^2(\R)}\lesssim_{\delta} h\,.
\]
Hence, according to \eqref{eq-L2-v},
\[
    \|Y_{n,k}\|_{L^2(\mathscr{C}_{2\delta})} =
    \|\widetilde{v}_{n,k}\|_{L^2(\widetilde{\mathscr{C}_{2\delta}})}\leq
    \|\chi\widetilde{v}_{n,k}\|_{L^2(\R)}\leq C_\delta h\,,
\]
which completes the proof of Proposition~\ref{prop:concentration}.
\end{proof}

\section{Preparations}\label{sec:randomization}
\subsection{Gaussian measures}
\label{sec:gm}
Recall that the Gaussian measure $\mu_{\alpha}$ of parameter $\alpha\in\R$ is defined in \eqref{eq:randomData}. To present some properties of the random functions $\phi_{\alpha}(\omega)$ in the support of $\mu_{\alpha}$, we need to first recall the local Weyl law for $-\Delta_{\mathbb{S}^{2}}$:
\begin{lemma}[Local Weyl's law]
\label{lem:weyl} For every $n\in\N$, every orthonormal basis $(\mathbf{b}_{n,k})$ of $\mathcal{E}_{n}$ and every $x\in\mathbb{S}^{2}$, we have 
\begin{equation}
\frac{1}{2n+1}\sum_{|k|\leq n}|\mathbf{b}_{n,k}(x)|^{2} = 1\,.
\label{eq:weyl}
\end{equation}
\end{lemma}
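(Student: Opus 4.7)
The plan is to exploit two facts: first, that the quantity $\sum_{|k|\le n}|\mathbf{b}_{n,k}(x)|^{2}$ is intrinsic to the eigenspace $\mathcal{E}_{n}$ (independent of the basis), and second, that $\mathcal{E}_{n}$ is invariant under the action of $SO(3)$, which acts transitively on $\mathbb{S}^2$.

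\medskip

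First I would observe that for any orthonormal basis $(\mathbf{b}_{n,k})_{|k|\le n}$ of $\mathcal{E}_{n}$, the Schwartz kernel of the orthogonal projector $\pi_{n}$ is
\[
K_{n}(x,y)=\sum_{|k|\le n}\mathbf{b}_{n,k}(x)\overline{\mathbf{b}_{n,k}(y)},
\]
so that $\sum_{|k|\le n}|\mathbf{b}_{n,k}(x)|^{2}=K_{n}(x,x)$. Since $\pi_{n}$ is defined purely in terms of the spectral decomposition of $-\Delta_{\mathbb{S}^{2}}$, the diagonal $K_{n}(x,x)$ is independent of the chosen basis. This already reduces the statement to showing that $x\mapsto K_{n}(x,x)$ is the constant $2n+1$.

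\medskip

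Next I would use rotation invariance. For any $R\in SO(3)$, the map $u\mapsto u\circ R$ is a unitary operator on $L^{2}(\mathbb{S}^{2})$ that commutes with $\Delta_{\mathbb{S}^{2}}$, hence preserves each eigenspace $\mathcal{E}_{n}$. It follows that $(\mathbf{b}_{n,k}\circ R)_{|k|\le n}$ is another orthonormal basis of $\mathcal{E}_{n}$, so by the basis-independence established above,
\[
K_{n}(Rx,Rx)=\sum_{|k|\le n}|\mathbf{b}_{n,k}(Rx)|^{2}=\sum_{|k|\le n}|\mathbf{b}_{n,k}(x)|^{2}=K_{n}(x,x).
\]
Since $SO(3)$ acts transitively on $\mathbb{S}^{2}$, the function $x\mapsto K_{n}(x,x)$ is constant.

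\medskip

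To identify this constant, I would integrate against the normalized measure $\mathrm{d}\sigma$ (recall $\mathbb{S}^{2}$ has total volume $1$):
\[
\int_{\mathbb{S}^{2}}\sum_{|k|\le n}|\mathbf{b}_{n,k}(x)|^{2}\,\mathrm{d}\sigma(x)=\sum_{|k|\le n}\|\mathbf{b}_{n,k}\|_{L^{2}(\mathbb{S}^{2})}^{2}=\dim \mathcal{E}_{n}=2n+1.
\]
Combining the two observations yields $K_{n}(x,x)=2n+1$ for every $x\in\mathbb{S}^{2}$, which, after division by $2n+1$, is exactly \eqref{eq:weyl}. There is no real obstacle here: the only point requiring a moment of care is the basis-independence of the diagonal, which I would state explicitly since the lemma is phrased uniformly in the orthonormal basis.
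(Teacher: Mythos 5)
Your proof is correct and follows essentially the same route as the paper's: identify the diagonal of the projector kernel, use $SO(3)$-invariance and transitivity to show it is constant, then integrate to evaluate the constant as $\dim\mathcal{E}_n=2n+1$. The only cosmetic difference is that you deduce $K_n(Rx,Rx)=K_n(x,x)$ via basis-independence applied to the rotated basis $(\mathbf{b}_{n,k}\circ R)$, whereas the paper derives it directly from $\pi_n\circ R=R\circ\pi_n$ acting on the kernel; these are the same observation phrased two ways.
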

We detail the very concise proof of this Lemma. 
\begin{proof} Given $n\in\N$ and $(\mathbf{b}_{n,k})_{|k|\leq n}$ an orthonormal basis of $\mathcal{E}_{n}$, the integral kernel of the orthogonal projector $\pi_{n}$ onto $\mathcal{E}_{n}$ is 
\[
K_{n}(x,y) = \sum_{|k|\leq n}\mathbf{b}_{n,k}(x)\overline{\mathbf{b}_{n,k}(y)}\,, \quad (x,y)\in(\mathbb{S}^{2})^{2}\,.
\]
It follows from the rotational invariance of the operator $-\Delta_{\mathbb{S}^{2}}$ that for all $R\in SO_{3}$,
\[
\pi_{n}\circ R = R\circ \pi_{n}\,,
\] 
which in turns implies that for every $(x,y)\in(\mathbb{S}^{2})^{2}$ and $R\in SO_{3}$ 
\[
K_{n}(Rx,y) = K_{n}(x,R^{-1}y)\,,
\]
In particular, for all $x\in\mathbb{S}^{2}$
\[
K_{n}(Rx,Rx) = K_{n}(x,x)\,.
\]
Since $SO_{3}$ acts transitively on $\mathbb{S}^{2}$, we deduce that
\[
x\in\mathbb{S}^{2} \mapsto K_{n}(x,x) = \sum_{|k|\leq n}|\mathbf{b}_{n,k}(x)|^{2}
\]
is a constant function. Integrating over $x\in\mathbb{S}^{2}$ and using the assumption that the spherical harmonics are normalized, we conclude the proof of the identity \eqref{eq:weyl}. 
\end{proof}

Let us now reorganize the terms in the series \eqref{eq:randomData}, which defines the Gaussian measure $\mu_{\alpha}$. Given an orthonormal basis $(\mathbf{b}_{n,k})$, we group the eigenfunctions in clusters of same eigenvalue, with multiplicity $2n+1$, and write 
\[
\phi_{\alpha}^{\omega} = \sum_{n\geq0}\lambda_{n}^{-\alpha}\sum_{|k|\leq n}g_{n,k}(\omega)\mathbf{b}_{n,k} \\
	=\sum_{n\geq0}\widetilde{\lambda}_{n}^{-(\alpha-\frac{1}{2})} e_{n}^{\omega}\,,
\]
where, for $n\in\N$,
\begin{equation}
\label{eq:def_en}
e_n^{\omega} =\frac{1}{\sqrt{2n+1}} \sum_{|k|\leq n}g_{n,k}(\omega) \mathbf{b}_{n,k}\,.
\end{equation}
and $\widetilde{\lambda}_{n} = \lambda_{n} (\frac{2n+1}{\lambda_{n}})^{\frac{1}{2}}.$ Since $\lambda_{n}$ and $\widetilde{\lambda}_{n}$ have the same asymptotic up to a factor $2$, we abuse notations and we keep writing 
\[
\lambda_{n}\approx \widetilde{\lambda}_{n}\,.
\]
For every $n\geq0$, $e^{\omega}_n$ can be seen as a Gaussian vector on $\mathcal{E}_n$, with 
\[
\mathbb{E}(e^{\omega}_n)=0\,,\quad \operatorname{Cov}(e^{\omega}_n) = \frac{1}{2n+1}\operatorname{Id}_{\mathcal{E}_n}\,,
\]  
	$\\$
where $\mathrm{Cov}(e_n^{\omega})$ is the covariance matrix of the Gaussian vector $e_n^{\omega}$. We recall in the next Lemma that the law of a complex standard Gaussian vector is invariant under the action of the unitary group: 

\begin{lemma}For all $X\sim \mathcal{N}_{\C}(0,\frac{1}{2n+1}\mathrm{Id}_{\mathcal{E}_{n}})$ and $A\in U(\mathcal{E}_{n})$, the unitary group of $\mathcal{E}_{n}$, we have
\[
 \mathscr{L}(AX)=\mathscr{L}(X)\,.
 \]
\end{lemma}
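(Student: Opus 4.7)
The plan is to prove the lemma by the density-invariance approach, exploiting the fact that a standard complex Gaussian vector on $\mathcal{E}_n$ has a density that depends only on the Hermitian norm, which is preserved by unitary transformations.

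First I would fix the orthonormal basis $(\mathbf{b}_{n,k})_{|k|\leq n}$ of $\mathcal{E}_n$ and identify $\mathcal{E}_n$ with $\C^{2n+1}$ by writing $X = \sum_{|k|\leq n} X_k \mathbf{b}_{n,k}$ where the coordinates $X_k$ are i.i.d.\ complex Gaussians of law $\mathcal{N}_\C(0,\tfrac{1}{2n+1})$. By the construction recalled in the paper, each $X_k$ has the form $\tfrac{1}{\sqrt{2(2n+1)}}(\mathfrak{g}_k + i \mathfrak{h}_k)$ with $\mathfrak{g}_k, \mathfrak{h}_k$ independent real standard Gaussians. Consequently, the joint law of $(X_k)_{|k|\leq n}$ on $\C^{2n+1}$ has density (against the Lebesgue measure on $\C^{2n+1}\simeq \R^{2(2n+1)}$)
\begin{equation*}
p(z) = \Bigl(\tfrac{2n+1}{\pi}\Bigr)^{2n+1} \exp\!\Bigl(-(2n+1)\sum_{|k|\leq n}|z_k|^2\Bigr) = \Bigl(\tfrac{2n+1}{\pi}\Bigr)^{2n+1} \exp\!\bigl(-(2n+1)\|z\|^2_{\mathcal{E}_n}\bigr).
\end{equation*}

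Next, I would note that $p(z)$ depends on $z$ only through the Hermitian norm $\|z\|_{\mathcal{E}_n}^2$. For any $A \in U(\mathcal{E}_n)$, the map $z\mapsto Az$ preserves this norm and has Jacobian determinant of modulus $1$ (viewed as a real-linear map of $\R^{2(2n+1)}$). Therefore, the pushforward of $p(z)\,dz$ under $z\mapsto Az$ equals $p(z)\,dz$ itself, which gives $\mathscr{L}(AX) = \mathscr{L}(X)$.

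As an equivalent route, one can simply compute the characteristic function $\Phi_X(\xi) = \mathbb{E}\bigl[\exp(i\,\mathrm{Re}\langle \xi, X\rangle_{\mathcal{E}_n})\bigr] = \exp\!\bigl(-\tfrac{1}{4(2n+1)}\|\xi\|^2_{\mathcal{E}_n}\bigr)$ and observe that $\Phi_{AX}(\xi)=\Phi_X(A^*\xi)=\Phi_X(\xi)$ since $A^*$ is an isometry. There is no serious obstacle in this proof; the only point requiring care is to treat $X$ as a complex Gaussian with independent real and imaginary parts (covariance $\tfrac{1}{2n+1}\mathrm{Id}$ and vanishing pseudo-covariance), so that the density truly factors as a function of $\|z\|_{\mathcal{E}_n}^2$ and the rotational invariance under the full unitary group (rather than only the orthogonal group) applies.
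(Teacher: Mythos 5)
Your proposal is correct. The paper's own proof is precisely your ``equivalent route'': it shows directly that the characteristic function satisfies $\varphi_{AX}(\xi) = \varphi_X(A^*\xi) = \e^{-\frac{1}{2n+1}\|A^*\xi\|^2} = \e^{-\frac{1}{2n+1}\|\xi\|^2} = \varphi_X(\xi)$, using nothing but the unitarity of $A^*$. Your primary route --- writing the density in coordinates as $\bigl(\tfrac{2n+1}{\pi}\bigr)^{2n+1}\exp\bigl(-(2n+1)\|z\|^2\bigr)$, observing it is a radial function of the Hermitian norm, and noting that a unitary map preserves both the norm and Lebesgue measure on $\R^{2(2n+1)}$ --- is a different but equally standard and correct argument. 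The characteristic-function route is slightly more economical (no change-of-variables or Jacobian to invoke, and no need to write out the density explicitly), while the density route makes the rotational invariance geometrically transparent; the point you flag about vanishing pseudo-covariance (so that the density is genuinely a function of $\|z\|^2$ and invariance holds under the full unitary group, not just the real orthogonal group) is exactly the right thing to keep in mind.
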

\begin{proof}
For $Y$ a random variable we denote by $\varphi_{Y}$  its characteristic function. For all $\xi\in\mathcal{E}_{n}$,
\[
\varphi_{AX}(\xi) = \mathbb{E}[\e^{i\langle\xi|AX\rangle}] = \mathbb{E}[\e^{i\langle A^{\ast}\xi|X\rangle}] = \varphi_{X}(A^{\ast}\xi) = \e^{-\frac{1}{2n+1}\|A^{\ast}\xi\|_{L^{2}}^{2}} = \e^{-\frac{1}{2n+1}\|\xi\|_{L^{2}}^{2}} = \varphi_{X}(\xi)\,.
\]
This proves the Lemma.
\end{proof}
A consequence of the above Lemma is that the law of $e_{n}^{\omega}$ --- and therefore the law $\mu_{\alpha}$ --- does not depend on the choice of the orthonormal eigenbasis of $\mathcal{E}_{n}$. Alternatively, we could fix $x$ and view $e_{n}^{\omega}(x)$ as a complex standard Gaussian random variable. Indeed, $\mathbb{E}[e_{n}^{\omega}(x)]=0$ and, according to  \eqref{eq:weyl},
\begin{equation}
\label{eq:ex}
\operatorname{Var}(e_{n}^{\omega}(x))=\frac{1}{2n+1}\sum_{|k|\leq n}|\mathbf{b}_{n,k}(x)|^{2}=1\,.
\end{equation}
In particular, the law of $e_{n}^{\omega}(x)$ does not depend on the point $x$ on the sphere.
\medskip

Let us now recall the elementary property of complex standard Gaussian variables: for all $n,k$, we have
\[
\mathbb{E}[g_{n,k}]=\mathbb{E}[g_{n,k}^2]=\mathbb{E}[g_{n,k}|g_{n,k}|^{2}] = 0\,.
\]
In particular, we deduce from the above identities and from the definition 
 \[
e_{n}^{\omega}(x)
	=(2n+1)^{-\frac{1}{2}}\sum_{|k|\leq n}
	g_{n,k}(\omega){\bf b}_{n,k}(x)
\]
that for fixed $n\geq0$ and $(x,y)\in(\mathbb{S}^{2})^{2}$,
\begin{equation}
\label{eq:zero-g}
\mathbb{E}[e_{n}^{\omega}(x)|e_{n}^{\omega}(y)|^{2}] 
	= \mathbb{E}[e_{n}^{\omega}(x)\|e_{n}^{\omega}\|_{L^{2}(\mathbb{S}^{2})}^{2}] 
	= 0\,.
\end{equation}
For fixed $x\in\mathbb{S}^{2}$, we will use the fact that $e_{n}^{\omega}(x)$ is $\mathcal{B}_{n}$-measurable, where $\mathcal{B}_{n}$ is the $\sigma$-algebra generated by $(g_{n,k})_{|k|\leq n}$. The independence of $\mathcal{B}_{n}$ from $\mathcal{B}_{n'}$ whenever $n\neq n'$ plays an important role in the multilinear estimates involving the random functions $e_{n}^{\omega}$. 

\subsection{$L^{p}$-bounds}

We recall the eigenfunctions estimates due to Sogge \cite{sogge0}. There exists $C>0$ such that for all $n\geq0$ and $f\in L^{2}(\mathbb{S}^{2})$, 
\begin{equation}
\label{eq:Sogge} 
\|\pi_{n}f\|_{L^p(\mathbb{S}^2)}
\leq C\|\pi_{n}f\|_{L^{2}(\mathbb{S}^{2})} 
	\begin{cases}
			&\!\!\!\!\!\! \lambda_{n}^{\frac{1}{2}(\frac{1}{2}-\frac{1}{p})},\; 2\leq p\leq 6   \\
			&\!\!\!\!\! \lambda_{n}^{\frac{1}{2}-\frac{2}{p}},\; 6\leq p\leq\infty.
	\end{cases}
\end{equation}

In contrast, we prove the $L^{p}(\mathbb{S}^{2})$-norm of a $L^{2}(\mathbb{S}^{2})$-normalized Gaussian spherical harmonic $e_{n}^{\omega}$ has its moment of order $p$ uniformly bounded in $n$. 

\begin{lemma}\label{lem:Lp} There exists $C>0$ such that for all $p\geq 2$ and $n\geq0$,
\begin{equation}
    \label{eq:Lp}
  	  \|e_{n}^{\omega}\|
    	_{L_{\omega}^{p}(\Omega ; L_{x}^{p}(\mathbb{S}^{2}))}
    	 \leq C\sqrt{p}\,.
\end{equation}
\end{lemma}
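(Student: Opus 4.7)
The plan is to exploit the pointwise Gaussianity identity \eqref{eq:ex}, which says that for each fixed $x \in \mathbb{S}^2$ the random variable $e_n^\omega(x)$ is a complex standard Gaussian of variance $1$, independently of both $x$ and $n$. Once this is in hand, the argument reduces to a one-line Fubini-plus-moments computation.

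First I would use Fubini's theorem to rewrite
\[
\|e_n^\omega\|_{L^p_\omega L^p_x}^p = \mathbb{E}\int_{\mathbb{S}^2}|e_n^\omega(x)|^p\,d\sigma(x) = \int_{\mathbb{S}^2}\mathbb{E}\bigl[|e_n^\omega(x)|^p\bigr]\,d\sigma(x).
\]
This swap is legitimate because for each $\omega$ the function $|e_n^\omega|^p$ is continuous hence integrable on the compact sphere, and $e_n^\omega$ is a finite linear combination of Gaussians (so integrability in $\omega$ is automatic).

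Next, for each fixed $x$, the random variable $e_n^\omega(x) = \frac{1}{\sqrt{2n+1}}\sum_{|k|\leq n} g_{n,k}(\omega)\,\mathbf{b}_{n,k}(x)$ is a complex standard Gaussian: it is centered, and by the local Weyl law \eqref{eq:weyl} its variance is exactly $1$, as already recorded in \eqref{eq:ex}. The $p$-th moment of a complex standard Gaussian $Z$ is computed by the standard Gamma-function identity $\mathbb{E}|Z|^p = \Gamma\bigl(\tfrac{p}{2}+1\bigr)$, and Stirling gives $\Gamma(p/2+1)^{1/p}\leq C\sqrt{p}$ for a universal constant $C$ and all $p\geq 2$.

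Combining the two steps,
\[
\|e_n^\omega\|_{L^p_\omega L^p_x}^p \leq (C\sqrt{p})^p \int_{\mathbb{S}^2}d\sigma(x) = (C\sqrt{p})^p,
\]
since $\mathbb{S}^2$ has unit volume with the chosen normalization \eqref{eq:leb}. Taking $p$-th roots yields the claim. There is no real obstacle here: the entire content of the statement is the pointwise Gaussianity coming from the local Weyl law, and the estimate itself is dimension-free precisely because the local Weyl constant is independent of $x$ and $n$.
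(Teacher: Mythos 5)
Your proof is correct and follows essentially the same route as the paper's: both arguments use Fubini to reduce to a pointwise-in-$x$ moment bound, which is then supplied by the local Weyl law \eqref{eq:weyl}. The only cosmetic difference is that you compute the $p$-th moment of the standard complex Gaussian $e_n^\omega(x)$ explicitly via the Gamma function, whereas the paper invokes Khintchine's inequality for the same bound.
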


\begin{proof} We deduce from Lemma \ref{lem:weyl} and from Khintchine’s inequality that for there exists $C>0$ such that for all $n\geq0$, $x\in\mathbb{S}^{2}$, $e_{n}^{\omega}(x)$ and $p\geq2$,
\[
  \|e_n^{\omega}(x)\|_{L^p_{\omega}(\Omega)} \leq C \sqrt{p}\,. 
\]
Subsequently, we deduce from the Fubini's Theorem that  
\[
\mathbb{E}
	\big[
	\|e_{n}^{\omega}\|_{L_{x}^{p}(\mathbb{S}^{2})}^{p}
	\big]^{\frac{1}{p}} 
	= 
	\big(
	\int_{\mathbb{S}^{2}}\|e_{n}^{\omega}(x)\|_{L_{\omega}^{p}}^{p}\mathrm{d}\sigma(x)
	\big)^{\frac{1}{p}} 
	\leq C\operatorname{vol}(\mathbb{S}^{2})^{\frac{1}{p}}\sqrt{p} = C\sqrt{p}\,,
\]
since we normalized the Lebesgue measure such that $\operatorname{vol}(\mathbb{S}^{2})=1$. This concludes the proof of Lemma~\eqref{lem:Lp}.
\end{proof}
In \cite{BurqLebeau14} the authors prove some more precise large deviation estimates, using the concentration of the measure. 
\subsection{Decomposition of the nonlinearity}

We decompose the Wick cubic power into three parts: 
\begin{align*}
:|u|^{2}u:&=|u|^2u-2\|u\|_{L^2(\mathbb{S}^{2})}^2u \\
	&= \mathcal{N}_{1}(u) + \mathcal{N}_{2}(u) + \mathcal{N}_{3}(u)
\end{align*}
where
\begin{align*}
	&\mathcal{N}_1(u)=\sum_{n_1,n_2,n_3}\mathbf{1}_{n_2\neq n_1,n_2\neq n_3 }\pi_{n_1}u\pi_{n_{2}}\overline{u}\pi_{n_3}u, \\
	&\mathcal{N}_2(u)=2\sum_{n_1,n_2}\mathbf{1}_{n_1\neq n_2}\big(|\pi_{n_{2}}u|^{2}-\|\pi_{n_{2}}u\|_{L^{2}(\mathbb{S}^{2})}^{2}\big)\pi_{n_1}u\\
	&\mathcal{N}_3(u)=\sum_{n}|\pi_{n}u|^{2}\pi_{n}u-2\|\pi_{n}u\|_{L^{2}(\mathbb{S}^{2})}^{2}\pi_{n}u\,.
\end{align*}

We isolate the resonant interaction from $\mathcal{N}_{2}$:
\begin{equation}
\label{eq:Nres}
\mathcal{N}_{2,\mathrm{res}}(u) := 2\sum_{n_{1},n_{2}}\mathbf{1}_{n_{1}\neq n_{2}}\pi_{n_{1}}
	\big(
	\pi_{n_{1}}u (|\pi_{n_{2}}u|^{2}-\|\pi_{n_{2}}u\|_{L^{2}(\mathbb{S}^{2})}^{2})
	\big)\,.
\end{equation}
As we will see in the next Section, the term $\mathcal{N}_{2,\mathrm{res}}$ is responsible for the divergence of the second Picard iteration \eqref{eq:FirstIterateSphere} claimed in our main Theorem.

\begin{remark}\label{rem:wick} In the case of the torus $\T^{2}$ where the plane waves $e^{in\cdot x}$ have constant amplitude, we have that for all $n\in\Z^{2}$ and $x\in\mathbb{T}^{2}$
\[
|e^{in\cdot x}|^{2} - \|e^{in\cdot x}\|_{L^{2}(\mathbb{T}^{d})}^{2} = 0\,,
\]
 so that the term $\mathcal{N}_{2}$ does not exit after the Wick ordering. On the sphere, however, the variable $x$ in the physical space has a role to play and pointwise in $x$,  the \emph{Wick square} 
 \[
 |e_{n}^{\omega}(x)|^{2} - \|e_{n}^{\omega}\|_{L^{2}(\mathbb{S})}^{2}
 \]
 has no reason to vanish. Still, it does vanish on average (both in space and in probability measure):
 \[
 \int_{\mathbb{S}^{2}} |e_{n}^{\omega}(x)|^{2} - \|e_{n}^{\omega}\|_{L^{2}(\mathbb{S})}^{2}\mathrm{d}\sigma(x) = \int_{\Omega} |e_{n}^{\omega}(x)|^{2} - \|e_{n}^{\omega}\|_{L^{2}(\mathbb{S})}^{2}\mathrm{d}\mathbb{P}(\omega) = 0\,.
 \]
\end{remark}

\subsection{The second Picard iteration}\label{sec:orga} Fix $\alpha\in\R$, and recall that the Gaussian measure $\mu_{\alpha}$ is induced by the random variable
\[
\phi_{\alpha}^{\omega} = \sum_{n\geq0}\lambda_{n}^{-(\alpha-\frac{1}{2})}e_{n}^{\omega}\,,
\]
where the Gaussian spherical harmonics $e_{n}^{\omega}$ are defined in \eqref{eq:def_en}. We denote $u^{\omega}(t)$ the linear evolution 
\begin{equation}
\label{eq:lin}
u^{\omega}(t,x) = \e^{it(\Delta-1)}\phi_{\alpha}^{\omega} = \sum_{n\geq0} \e^{-it\lambda_{n}^{2}}\lambda_{n}^{-(\alpha-\frac{1}{2})}e_{n}^{\omega}(x)\,.
\end{equation}
In order to make sense to the series in $L^{\infty}(\R;H^{s}(\mathbb{S}^{2}))$ for any $s\in\R$, we also consider the finite dimensional approximation 
\[
P_{\leq N}u^{\omega}(t,x) = \sum_{n\ :\ \lambda_{n}\leq N}e^{-it\lambda_{n}^{2}}\lambda_{n}^{-(\alpha-\frac{1}{2})}e_{n}^{\omega}(x)\,.
\]
For $N\in\N$, dyadic, set 
\[
\Gamma_{N} = \Big\{\vec{n}=(n_{0},n_{1},n_{2},n_{3})\in\N^{4}\ |\ n_{i}\leq N\ \text{for}\ i\in\{1,2,3\}\Big\}
\]
and 
\begin{align*}
&\Gamma_{N}^{(1)} := \{\vec{n}\in\Gamma_{N}\quad |\quad n_{2}\neq n_{1}\,,\quad n_{2}\neq n_{3} \}\,,\\
&\Gamma_{N}^{(2)}:=\{\vec{n}\in\Gamma_{N}\quad |\quad n_{2}=n_{3}\,,\quad n_{2}\neq n_{1}\}\,,\\
&\Gamma_{N}^{(3)}:= \{\vec{n}\in\Gamma_{N}\quad |\quad n_{1}=n_{2}=n_{3}\}\,.
\end{align*}
For $\vec{n}\in\Gamma_{N}$ we denote the resonance function 
\[
\Omega(\vec{n}) = \lambda_{n_{0}}^{2}-\lambda_{n_{1}}^{2}+\lambda_{n_{2}}^{2}-\lambda_{n_{3}}^{2}\,.
\]
Then, we have 
\begin{align}
\nonumber\mathcal{I}_{\mathbb{S}^{2}}(t,P_{\leq N}u^{\omega}) &=-i \int_{0}^{t}e^{i(t-t')(\Delta-1)}\mathcal{N}(P_{\leq N}u^{\omega}(t'))\mathrm{d}t' \\
\nonumber	&=-i\int_{0}^{t}e^{i(t-t')(\Delta-1)}
	\big(
	\mathcal{N}_{1}(P_{\leq N}u^{\omega}(t'))+\mathcal{N}_{2}(P_{\leq N}u^{\omega}(t'))+\mathcal{N}_{3}(P_{\leq N}u^{\omega}(t'))
	\big)
	\mathrm{d}t'\\
\label{eq:duhamel}	&=\mathrm{I} + \mathrm{II} + \mathrm{III}\,,
\end{align}
where 
\begin{align}
	\label{eq:I}&\mathrm{I}_{N}(t,x) 
	= 
	\sum_{\vec{n}\in \Gamma_{N}^{(1)}}
	\e^{-it\lambda_{n_{0}}^{2}}
	\big(
	\frac{\e^{-it\Omega(\vec{n})}-1}{\Omega(\vec{n})}
	\big)
	(\lambda_{n_{1}}\lambda_{n_{2}}\lambda_{n_{3}})^{-(\alpha-\frac{1}{2})}\pi_{n_{0}}
	\big(e_{n_{1}}^{\omega}\overline{e}_{n_{2}}^{\omega}e_{n_{3}}^{\omega}
	\big)\,, \\
	\label{eq:II}
	&\mathrm{II}_{N}(t,x)
	= \sum_{\vec{n}\in\Gamma_{N}^{(2)}}
	\e^{-it\lambda_{n_{0}}^{2}}
	\big(
	\frac{\e^{-it\Omega(\vec{n})}-1}{\Omega(\vec{n})}
	\big)
	(\lambda_{n_{1}}\lambda_{n_{2}}^{2})^{-(\alpha-\frac{1}{2})}\pi_{n_{0}}
	\big(e_{n_{1}}^{\omega}(|e_{n_{2}}^{\omega}|^{2}-\|e_{n_{2}}^{\omega}\|_{L^{2}\mathbb{S}^{2}}^{2})
	\big) \\
	\label{eq:III}
	&\mathrm{III}_{N}(t,x) 
	= \sum_{\vec{n}\in\Gamma_{N}^{(3)}}
	\e^{-it\lambda_{n_{0}}^{2}}
	\big(
	\frac{\e^{-it\Omega(\vec{n})}-1}{\Omega(\vec{n})}
	\big)
	\lambda_{n_{1}}^{-3(\alpha-\frac{1}{2})}\pi_{n_{0}}
	\big(
	|e_{n_{1}}^{\omega}|^{2}e_{n_{1}}^{\omega}
	\big) 
	- 
	2\sum_{n\geq0}\e^{-it\lambda_{n_{0}}^{2}}\lambda_{n}^{-3(\alpha-\frac{1}{2})}\|e_{n}^{\omega}\|_{L^{2}(\mathbb{S}^{2})}^{2}e_{n}^{\omega}\,.
\end{align}
\section{Proof of the main Theorem}
\label{sec:proof}
Given $\alpha\in\R$ our goal is to show a logarithmic divergence in $N$ of the $H^{\alpha-1}(\mathbb{S}^{2})$-norm of \eqref{eq:duhamel}. 

 \subsection{Isolating the divergent term}
We first isolate the singular contribution \eqref{eq:II} from \eqref{eq:I} by using probabilistic independence, and the ingredients such as \eqref{eq:zero-g} discussed at the end of Section~\ref{sec:gm}.
\begin{lemma}
\label{lem:iso} For every $\alpha$, $N$ and $t$, we have 
\begin{equation}
\label{eq:iso}
\Big|\|\mathrm{II}_{N}(t)\|_{L^{2}(\Omega ; H^{\alpha-1}(\mathbb{S}^{2}))}-\|\mathrm{III}_{N}(t)\|_{L^{2}(\Omega ; H^{\alpha-1}(\mathbb{S}^{2}))}\Big|
	 \leq \|\mathcal{I}_{\mathbb{S}^{2}}(t,P_{\leq N}u^{\omega})\|_{L^{2}(\Omega ; H^{\alpha-1}(\mathbb{S}^{2}))}\,,
\end{equation}
where the notations are introduced in Section~\ref{sec:orga}
\end{lemma}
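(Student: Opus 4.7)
The strategy is to recognize~\eqref{eq:iso} as a consequence of an orthogonal decomposition in the Hilbert space $L^{2}(\Omega; H^{\alpha-1}(\mathbb{S}^{2}))$, combined with an elementary real-variable inequality. The starting point is the identity
\[
\mathcal{I}_{\mathbb{S}^{2}}(t,P_{\leq N}u^{\omega})=\mathrm{I}_{N}(t)+\mathrm{II}_{N}(t)+\mathrm{III}_{N}(t)
\]
provided by \eqref{eq:duhamel}--\eqref{eq:III}. The core of the proof is then to show that the three summands are pairwise orthogonal in $L^{2}(\Omega; H^{\alpha-1}(\mathbb{S}^{2}))$.

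For the orthogonality, since the $H^{\alpha-1}$ inner product diagonalises with respect to the spectral projectors $\pi_{n_{0}}$, it suffices to verify that for every $n_{0}\in\N$, every $x\in\mathbb{S}^{2}$, and every distinct pair $\mathrm{J}_{N},\mathrm{K}_{N}\in\{\mathrm{I}_{N},\mathrm{II}_{N},\mathrm{III}_{N}\}$,
\[
\mathbb{E}\bigl[\pi_{n_{0}}\mathrm{J}_{N}(t,x)\,\overline{\pi_{n_{0}}\mathrm{K}_{N}(t,x)}\bigr]=0.
\]
I would verify these identities by Isserlis'/Wick's theorem for complex Gaussians: any expectation of a product of factors $g_{n,k}$ and $\overline{g_{n',k'}}$ decomposes as a sum over pairings, each pairing requiring matching indices $(n,k)=(n',k')$. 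The Wick subtractions defining $\mathcal{N}_{2}$ and $\mathcal{N}_{3}$ are designed precisely to eliminate the admissible internal self-pairings (for instance contracting the two Gaussian factors inside $|e_{n_{2}}|^{2}$, whose expectation produces $\|e_{n_{2}}\|_{L^{2}}^{2}$). Once these are ruled out, a direct bookkeeping on the remaining cross-pairings shows that no admissible matching between $\mathrm{J}_{N}$ and $\overline{\mathrm{K}_{N}}$ is compatible with the disjointness constraints defining $\Gamma_{N}^{(j)}$: typically every would-be pairing forces one of the forbidden equalities $n_{1}=n_{2}$, $n_{2}=n_{3}$, or $m_{1}=m_{2}$.

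Granted pairwise orthogonality, the Pythagorean identity in $L^{2}(\Omega; H^{\alpha-1}(\mathbb{S}^{2}))$ yields
\[
\|\mathcal{I}_{\mathbb{S}^{2}}(t,P_{\leq N}u^{\omega})\|^{2}
=\|\mathrm{I}_{N}\|^{2}+\|\mathrm{II}_{N}\|^{2}+\|\mathrm{III}_{N}\|^{2}
\geq \|\mathrm{II}_{N}\|^{2}+\|\mathrm{III}_{N}\|^{2},
\]
and the elementary bound $(a-b)^{2}\leq a^{2}+b^{2}$ for $a,b\geq 0$, applied with $a=\|\mathrm{II}_{N}\|$ and $b=\|\mathrm{III}_{N}\|$, immediately gives~\eqref{eq:iso}. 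The main technical obstacle is the combinatorial verification of the orthogonality relations. One must track the contribution of every Wick-contraction pattern, and in particular account for any lower-order residuals that survive because the Wick subtraction $-2\|\pi_{n}u\|_{L^{2}}^{2}\pi_{n}u$ does not completely cancel on $\mathbb{S}^{2}$ (this is precisely the phenomenon recorded in Remark~\ref{rem:wick}, which makes the sphere differ from the torus). Handling these residuals requires the same type of analysis: one checks case by case that any Wick pairing which would contract them against the other term would again violate the frequency constraints of $\Gamma_{N}^{(j)}$, so the cross expectations vanish and the Pythagorean decomposition remains valid.
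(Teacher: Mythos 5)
Your plan is correct and proves the inequality, but it takes a slightly longer route than the paper and commits you to an extra (true, but unnecessary) orthogonality claim. The paper only shows $\mathbb{E}\langle \mathrm{I}_N \mid \mathrm{II}_N\rangle = \mathbb{E}\langle \mathrm{I}_N\mid \mathrm{III}_N\rangle = 0$; it does \emph{not} assert $\mathrm{II}_N\perp\mathrm{III}_N$, but instead keeps $\mathrm{II}_N+\mathrm{III}_N$ as a block, obtains $\|\mathcal{I}\|^2 = \|\mathrm{I}_N\|^2 + \|\mathrm{II}_N+\mathrm{III}_N\|^2$, and concludes via the reverse triangle inequality $\|\mathrm{II}_N+\mathrm{III}_N\|\ge\big|\|\mathrm{II}_N\|-\|\mathrm{III}_N\|\big|$. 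Your version requires proving the additional orthogonality $\mathrm{II}_N\perp\mathrm{III}_N$ so that the Pythagorean identity has three terms, and then $(a-b)^2\le a^2+b^2$; this extra orthogonality does hold (independence plus the fact that the Wick square $|e_n(x)|^2-\|e_n\|_{L^2}^2$ has mean zero kills every cross term), but it is a separate verification you would have to carry out, whereas the paper avoids it entirely. Also be careful with the stated mechanism: the Wick subtraction here is $-\|e_{n_2}\|_{L^2(\mathbb{S}^2)}^2$, a \emph{random} quantity, not the deterministic contraction $\mathbb{E}|e_{n_2}(x)|^2=1$ furnished by the local Weyl law; the two agree only in expectation, so the cross terms do not vanish because ``the internal self-pairings are removed'' but because every surviving factorization contains a factor $\mathbb{E}\big[|e_{n_2}(x)|^2-\|e_{n_2}\|_{L^2}^2\big]=0$ or $\mathbb{E}[e_{n_1}(x)]=0$. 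With that bookkeeping made explicit, your argument is sound and yields the same conclusion, just with one more orthogonality to check than strictly necessary.
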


\begin{proof} For simplicity we consider the case $\alpha=1$. The other cases are the same up to applying $\langle\Delta\rangle^{\frac{\alpha-1}{2}}$ to every term. We have 
\begin{multline*}
\|\mathcal{I}_{\mathbb{S}^{2}}(t,P_{\leq N}u^{\omega})\|_{L^{2}(\Omega ; L^{2}(\mathbb{S}^{2}))}^{2} = \|\mathrm{I}_{N}(t)\|_{L^{2}(\Omega ; L^{2}(\mathbb{S}^{2}))}^{2} + \|\mathrm{II}_{N}(t)+\mathrm{III}_{N}(t)\|_{L^{2}(\Omega ; L^{2}(\mathbb{S}^{2}))}^{2} \\
+ 2\mathbb{E}[\langle \mathrm{I}_{N}(t)|\mathrm{II}_{N}(t)+\mathrm{III}_{N}(t) \rangle_{L^{2}(\mathbb{S}^{2})}]\,.
\end{multline*}
We show that for every $t$ and $N$,
\begin{equation}
\label{eq:orth-I}
\mathbb{E}[\langle \mathrm{I}_{N}(t)|\mathrm{II}_{N}(t)\rangle_{L^{2}(\mathbb{S}^{2})}]=\mathbb{E}[\langle \mathrm{I}_{N}(t)|\mathrm{III}_{N}(t)\rangle_{L^{2}(\mathbb{S}^{2})}]=0\,.
\end{equation}
Expanding the scalar product, we have
\begin{multline*}
\mathbb{E}[\langle \mathrm{I}_{N}(t)|\mathrm{II}_{N}(t)\rangle_{L^{2}(\mathbb{S}^{2})}] = \sum_{\vec{n}\in\Gamma_{N}^{(1)}}\sum_{\vec{n}'\in\Gamma_{N}^{(2)}}\mathbf{1}_{n_{0}=n_{0}'}
	\mathbb{E}
	\Big[
	\big\langle \pi_{n_{0}}(e_{n_{1}}^{\omega}\overline{e}_{{n_{2}}}^{\omega}e_{n_{3}}^{\omega})\,
	\big|\,\overline{e}_{n_{1}'}^{\omega}(|e_{n'_{2}}^{\omega}|^{2}-\|e_{n'_{2}}^{\omega}
	\|_{L^{2}(\mathbb{S}^{2})}^{2})
	\big\rangle_{L^{2}(\mathbb{S}^{2})}\Big]\\
	\big(\frac{\e^{-it\Omega(\vec{n})}-1}{\Omega(\vec{n})}\big)
	(\lambda_{n_{1}}\lambda_{n_{2}}\lambda_{n_{3}})^{-(\alpha-\frac{1}{2})}
	\big(\frac{\e^{it\Omega(\vec{n}')}-1}{\Omega(\vec{n}')}
	\big)
	(\lambda_{n_{1}'}\lambda_{n_{2}'}^{2})^{-(\alpha-\frac{1}{2})}\,.
\end{multline*}
Using the integral kernel $K_{n_{0}}$ of $\pi_{n_{0}}$ we observe that for every $(\vec{n}, \vec{n}')\in(\Gamma_{N}^{(1)}\times \Gamma_{N}^{(2)})$,
\begin{multline*}
\mathbb{E}\Big[
	\big\langle \pi_{n_{0}}(e_{n_{1}}^{\omega}\overline{e}_{{n_{2}}}^{\omega}e_{n_{3}}^{\omega})\,
	\big|\,\overline{e}_{n_{1}'}^{\omega}(|e_{n_{2}}^{\omega}|^{2}-\|e_{n_{2}}^{\omega}\|_{L^{2}(\mathbb{S}^{2})}^{2})
	\big\rangle_{L^{2}(\mathbb{S}^{2})}\Big] \\
	= \int_{(\mathbb{S}^{2})^{2}}K_{n_{0}}(x,y)
	\mathbb{E}
	\big[
	e_{n_{1}}^{\omega}(x)\overline{e}_{n_{2}}^{\omega}(x)e_{n_{3}}^{\omega}(x)\overline{e}_{n_{1}'}^{\omega}(y)(|e_{n_{2}'}^{\omega}(y)|^{2}-\|e_{n_{2}'}^{\omega}\|_{L^{2}(\mathbb{S}^{2})}^{2})
	\big]
	\mathrm{d}\sigma(x)\mathrm{d}\sigma(y)\,.
\end{multline*}
We see from the non-pairing condition $n_{2}\neq n_{1}, n_{3}$ of $\vec{n}\in\Gamma_{N}^{(1)}$ and from \eqref{eq:zero-g} that, for fixed~$x$ and~$y$, 
\[
\mathbb{E}
	\Big[e_{n_{1}}^{\omega}(x)
	\overline{e}_{n_{2}}^{\omega}(x)
	e_{n_{3}}^{\omega}(x)
	\overline{e}_{n_{1}'}^{\omega}(y)
	(|e_{n_{2}'}^{\omega}(y)|^{2}-\|e_{n_{2}'}^{\omega}\|_{L^{2}(\mathbb{S}^{2})}^{2})\Big] = 0\,.
\]
This gives the first equality in \eqref{eq:orth-I}. The second follows analogously, using that for all $\vec{n}\in\Gamma_{N}^{(1)}$ and $\vec{n}'\in\Gamma_{N}^{(3)}$, and for all $x$, $y$ in $\mathbb{S}^{2}$,
\[
\mathbb{E}\Big[ e_{n_{1}}^{\omega}(x)\overline{e}_{n_{2}}^{\omega}(x)e_{n_{3}}^{\omega}(x) \overline{e}_{n_{1}'}^{\omega}(y)|e_{n'_{1}}^\omega(y)|^{2} \Big] = \mathbb{E}\Big[ e_{n_{1}}^{\omega}(x)\overline{e}_{n_{2}}^{\omega}(x)e_{n_{3}}^{\omega}(x) \overline{e}_{n_{1}'}^{\omega}(y)\|e_{n'_{1}}^\omega(y)\|_{L^{2}(\mathbb{S}^{2})}^{2} \Big] =0\,.
\]
This proves \eqref{eq:orth-I}. We deduce that 
\[
\|\mathcal{I}_{\mathbb{S}^{2}}(t,P_{\leq N}u^{\omega})\|_{L^{2}(\Omega ; L^{2}(\mathbb{S}^{2}))}^{2} = \|\mathrm{I}_{N}(t)\|_{L^{2}(\Omega ; L^{2}(\mathbb{S}^{2}))}^{2} + \|\mathrm{II}_{N}(t)+\mathrm{III}_{N}(t)\|_{L^{2}(\Omega ; L^{2}(\mathbb{S}^{2}))}^{2}\,,
\]
and we conclude from the triangle inequality that 
\begin{multline*}
\|\mathcal{I}_{\mathbb{S}^{2}}(t,P_{\leq N}u^{\omega})\|_{L^{2}(\Omega ; L^{2}(\mathbb{S}^{2}))} 
	\geq  \|\mathrm{II}_{N}(t)+\mathrm{III}_{N}(t)\|_{L^{2}(\Omega ; L^{2}(\mathbb{S}^{2}))}  
	\\
	\geq 
	\big|
	\|\mathrm{II}_{N}(t)\|_{L^{2}(\Omega ; H^{\alpha-1}(\mathbb{S}^{2}))}-\|\mathrm{III}_{N}(t)\|_{L^{2}(\Omega ; H^{\alpha-1}(\mathbb{S}^{2}))}
	\big|\,.
\end{multline*}
This concludes the proof of Lemma \ref{lem:iso}. 
\end{proof}

Let us now show that the term $\mathrm{III}_{N}$, written in \eqref{eq:III}, gains one derivative with respect to the initial data $\phi_{\alpha}$. This implies in particular that it is bounded in $L^{2}(\Omega ;H^{\alpha-1}(\mathbb{S}^{2}))$ uniformly in $N$. 
\begin{lemma}
\label{lem:III}
For every $\alpha$ and $s$ such that $s<3\alpha-2$, 
\[
\underset{N}{\sup}\ \mathbb{E}\Big[\|\mathrm{III}_{N}(t)\|_{H^{s}(\mathbb{S}^{2})}^2\Big]<+\infty\,. 
\]
In particular, when $\alpha > \frac{1}{2}$ then 
\[
\underset{N}{\sup}\ \mathbb{E}\Big[\|\mathrm{III}_{N}(t)\|_{H^{\alpha-1}(\mathbb{S}^{2})}^2\Big]<+\infty\,.
\]
\end{lemma}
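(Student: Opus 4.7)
The plan is to exploit the probabilistic independence implicit in $\Gamma_N^{(3)}$: each $\vec n \in \Gamma_N^{(3)}$ satisfies $n_1=n_2=n_3=:n$, so every summand depends only on the Gaussians $\{g_{n,k}\}_{|k|\leq n}$ of a single block, and the distinct blocks are mutually independent. Each block's contribution is centered (odd degree in the Gaussians, combined with the Wick subtraction), so by orthogonality in $L^2(\Omega)$ and in $H^{s}(\mathbb{S}^{2})$ I would write
\[
\mathbb{E}\big[\|\mathrm{III}_{N}(t)\|_{H^{s}(\mathbb{S}^{2})}^{2}\big] \;=\; \sum_{n\leq N}\lambda_{n}^{-6(\alpha-\frac{1}{2})}\,\mathbb{E}\big[\|F_n(t)\|_{H^{s}(\mathbb{S}^{2})}^{2}\big],
\]
where $F_n(t)$ collects the $n_1=n$ contributions, namely the sum over $n_0$ of $\mathcal T_{n_0,n}(t)\,\pi_{n_0}(|e_n^{\omega}|^{2}e_n^{\omega})$ together with the Wick subtraction $-2\|e_n^{\omega}\|_{L^{2}}^{2}\,e_n^{\omega}$; here $\mathcal T_{n_0,n}(t)=(e^{-it\Omega}-1)/\Omega$ with $\Omega=\lambda_{n_0}^{2}-\lambda_{n}^{2}$, interpreted as $-it$ when $n_0=n$.

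The core task is then to bound $\mathbb{E}[\|F_n\|_{H^{s}}^{2}]$ uniformly in $n$, splitting the $n_0$-sum into the resonant index $n_0=n$ and the non-resonant ones $n_0\neq n$. Since products of three spherical harmonics of degree $n$ decompose into eigenspaces of degree at most $3n$, only $n_0\leq 3n$ contributes to $F_n$. For the non-resonant piece, $|\mathcal T_{n_0,n}(t)|\leq \min(|t|,2/|\Omega|)$ with $|\Omega|=|n_0-n|(n_0+n+1)\geq 1$, and the uniform moment bound
\[
\mathbb{E}\big[\|\pi_{n_0}(|e_n|^{2}e_n)\|_{L^{2}}^{2}\big]\;\leq\;\mathbb{E}\big[\|e_n\|_{L^{6}(\mathbb{S}^{2})}^{6}\big]\;\lesssim\;1,
\]
coming from Lemma~\ref{lem:Lp}, turns the non-resonant $n_0$-sum into a convergent geometric series.

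The resonant index $n_0=n$ is more delicate and is where the Wick subtraction is essential. Introducing the pointwise Wick cube
\[
\mathrm{W}_n(x)\;:=\;|e_n(x)|^{2}e_n(x)\,-\,2\,e_n(x),
\]
which is legitimate since $\mathbb{E}|e_n(x)|^{2}=1$ for every $x\in\mathbb{S}^{2}$ by the local Weyl law (Lemma~\ref{lem:weyl}), one rewrites the resonant bracket as
\[
\pi_n(|e_n|^{2}e_n)\,-\,2\|e_n\|_{L^{2}}^{2}\,e_n \;=\; \pi_n(\mathrm{W}_n)\,-\,2\big(\|e_n\|_{L^{2}}^{2}-1\big)\,e_n.
\]
Both terms on the right lie in the third Wiener chaos with $L^{2}(\Omega;L^{2}(\mathbb{S}^{2}))$-norms uniformly bounded in $n$: a direct Wick computation gives $\mathbb{E}[\|\mathrm{W}_n\|_{L^{2}}^{2}]=2$, and the concentration of a chi-square distribution yields $\mathrm{Var}(\|e_n\|_{L^{2}}^{2})=\mathcal{O}(1/n)$. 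The resonant contribution therefore adds $\mathcal{O}(t^{2})\,\lambda_n^{2s-6(\alpha-\frac{1}{2})}$ to the $n$-sum.

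Summing both pieces in $n\leq N$ and checking that the resulting series converges uniformly in $N$ yields the first statement in the claimed range $s<3\alpha-\frac{1}{2}$. The second assertion follows immediately by taking $s=\alpha-1$, which satisfies $s<3\alpha-\frac{1}{2}$ whenever $\alpha>-\frac{1}{4}$, and in particular when $\alpha>\frac{1}{2}$. The main technical hurdle is to keep all constants uniform in $n$ and to exploit the Wick cancellation in the resonant piece precisely enough to reach the sharp regularity threshold, as a crude deterministic estimate of $\pi_n(|e_n|^{2}e_n)$ would lose derivatives and only give a strictly smaller range of admissible $s$.
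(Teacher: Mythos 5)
Your overall structure is sound and differs slightly from the paper's. You use mutual independence and centering of the blocks $\{g_{n,k}\}_{|k|\le n}$ to write $\mathbb{E}\|\mathrm{III}_N\|_{H^s}^2$ as a diagonal $n$-sum, then split each block into the resonant index $n_0=n$ (where the Duhamel factor is $-it$, whence the Wick subtraction is needed) and the non-resonant $n_0\neq n$ (where $|\Omega|\gtrsim n$ gives decay). The paper instead applies Minkowski, decomposes dyadically in the block size $K$, and uses the crude bounds $\lambda_{n_0}^{2s}\lesssim K^{2s}$ and $|\mathcal T|\lesssim |t|$ before summing $\sum_{n_0}\mathbb E\|\pi_{n_0}(|e_n|^2e_n)\|_{L^2}^2=\mathbb E\|e_n\|_{L^6}^6\lesssim 1$. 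Both routes lead to the same final series.

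But there is an arithmetic slip in your last step that you should fix. The resonant contribution adds, as you say, $\mathcal O(t^2)\lambda_n^{2s-6(\alpha-\frac12)}$ to the $n$-sum, and since $\lambda_n\sim n$, the series $\sum_n \lambda_n^{2s-6(\alpha-1/2)}$ converges iff $2s-6\alpha+3<-1$, that is $s<3\alpha-2$, \emph{not} $s<3\alpha-\tfrac12$ as you assert. (Your non-resonant piece is $\mathcal O(n^{\max(2s,0)-2})$ per block and is the smaller contribution, so it does not change this.) Your subsequent remark that the second assertion already follows for $\alpha>-\frac14$ is therefore incorrect: $\alpha-1<3\alpha-2$ is exactly $\alpha>\frac12$, which is what the paper's ``in particular'' requires. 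Note also that the paper's own proof ends with ``conclusive when $s<3\alpha-2$,'' so the range $s<3\alpha-\frac12$ in the lemma statement is a misprint; your argument matches the proof actually given. Finally, your pointwise Wick-cube observation ($W_n=|e_n|^2e_n-2e_n$, $\mathbb E\|W_n\|_{L^2}^2=2$) is correct but, as used, does not sharpen the threshold beyond $3\alpha-2$, because you only bound $\mathbb E\|\pi_n W_n\|_{L^2}^2\le\mathbb E\|W_n\|_{L^2}^2=\mathcal O(1)$ without exploiting the smallness of the projection; extracting extra decay in $n$ from $\pi_n W_n$ would require a finer computation involving the $L^4$ norm of the spectral kernel $K_n$, which neither you nor the paper carries out and which is not needed at the regularity $s=\alpha-1$ with $\alpha>\frac12$.
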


\begin{proof}
Fix $N\in2^{\N}$. By Minkowski and the structure of $\mathrm{III}_N(t)$, we have 
\[
\Big( \mathbb{E}\Big[\|\mathrm{III}_{N}(t)\|_{H^{s}(\mathbb{S}^{2})}^2\Big]\Big)^{\frac{1}{2}} \leq \sum_{\substack{K\leq N\\ K\mathrm{dyadic}}}  \Big(\mathbb{E}\Big[\|\mathrm{III}(\operatorname{P}_{K}u^{\omega})(t)\|_{H^{s}(\mathbb{S}^{2})}^2\Big]\Big)^{\frac{1}{2}}\,.
\]
It suffices to show that there exists $\delta>0$, depending on $s$ and $\alpha$, such that for all $K$,
\[
 \mathbb{E}\Big[\|\mathrm{III}_N(\operatorname{P}_{K}u^{\omega})(t)\|_{H^{s}(\mathbb{S}^{2})}^2\Big]\lesssim K^{-\delta}\,.
\]
We have
\begin{multline}
\label{eq:III-s}
 \mathbb{E}
 	\Big[\|\mathrm{III}_N(\operatorname{P}_{K}u^{\omega})(t)\|_{H^{s}(\mathbb{S}^{2})}^2\Big] \\
 	=\sum_{n_{0}}\lambda_{n_{0}}^{2s}\mathbb{E}
	\Big[
	\big\|\pi_{n_{0}}\sum_{n_{1}\sim K}
	\big(
	\frac{\e^{-it(\lambda_{n_{0}}^{2}-\lambda_{n_{1}}^{2})}-1}{\lambda_{n_{0}}^{2}-\lambda_{n_{1}}^{2}}
	\big)\lambda_{n_{1}}^{-3(\alpha-\frac{1}{2})}|e_{n_{1}}^{\omega}|^{2}e_{n_{1}}^{\omega}
	\big\|_{L^{2}(\mathbb{S}^{2})}^{2}
	\Big]\,.
\end{multline}
By some degree considerations we see that the terms contribute only when $n_{0}\leq 3n_{1}$. Moreover, expanding the square we see that 
\begin{multline*}
|\eqref{eq:III-s}| 
	\lesssim \sum_{\lambda_{n_{0}}\lesssim K}\sum_{n_{1},n_{1}'\sim K}\lambda_{n_{0}}^{2s}(\lambda_{n_{1}}\lambda_{n_{1}'})^{-3(\alpha-\frac{1}{2})}\\
	\Big|\int_{(\mathbb{S}^{2})^{2}}K_{n_{0}}(x,y)\mathbb{E}
	[e_{n_{1}}^{\omega}(x)|e_{n_{1}}^{\omega}(x)|^{2}\overline{e}_{n_{1}'}^{\omega}(y)|e_{n_{1}'}^{\omega}(y)|^{2}] 
	\mathrm{d}\sigma(x)\mathrm{d}\sigma(y)\Big|
\end{multline*}
Using the independence of $e_{n_{1}}(x)$ and $e_{n_{1}'}(y)$ when $n_{1}\neq n_{1}'$, we see that only the terms with $n_{1}=n_{1}'$ contribute. Hence, 
\begin{align*}
|\eqref{eq:III-s}|
	&\lesssim K^{2s}\sum_{n_{0}\lesssim K}\sum_{n_{1}\sim K}\lambda_{n_{1}}^{-6(\alpha-\frac{1}{2})}\Big|\mathbb{E}\Big[
	\int_{(\mathbb{S}^{2})^{2}}K_{n_{0}}(x,y)|e_{n_{1}}^{\omega}(x)|^{2}|e_{n_{1}}^{\omega}(y)|^{2}e_{n_{1}}^{\omega}(x)\overline{e}_{n_{1}}^{\omega}(y)
	\mathrm{d}\sigma(x)\mathrm{d}\sigma(y)\Big]\Big| \\
	&\lesssim K^{2s}\sum_{n_{0}\lesssim K}\sum_{n_{1}\sim K}\lambda_{n_{1}}^{-6(\alpha-\frac{1}{2})} \mathbb{E}\|\pi_{n_{0}}(|e_{n_{1}}^{\omega}|^{2}e_{n_{1}}^{\omega})\|_{L^{2}(\mathbb{S}^{2})}^{2}] \\
	&\lesssim K^{2s}\sum_{n_{1}\sim K}\lambda_{n_{1}}^{-6(\alpha-\frac{1}{2})} \mathbb{E}\|e_{n_{1}}^{\omega}\|_{L^{6}(\mathbb{S}^{2})}^{6}\,.
\end{align*}
According to \eqref{eq:Lp} we conclude that 
\[
|\eqref{eq:III-s}| \lesssim K^{2s}\sum_{n_{1}\sim K}\lambda_{n_{1}}^{-6(\alpha-\frac{1}{2})} \lesssim K^{2s+1-6(\alpha-\frac{1}{2})}\,,
\]
which is conclusive when 
\[
s<3\alpha-2 = \alpha - 1 + 2(\alpha-\frac{1}{2})\,.
\]
This concludes the proof of Lemma \ref{lem:III}. 
\end{proof}

At this stage, we proved in Lemma~\ref{lem:iso} and in Lemma~\ref{lem:III} that there exists $C>0$ such that for all $N\geq 1$, $\alpha>\frac{1}{2}$,  $t\in\R$ and $s\leq 3\alpha - 2$,
\begin{equation}
\label{eq:redu}
\|\operatorname{II}_{N}(t)\|_{L^{2}(\Omega; H^{\alpha-1}(\mathbb{S}^{2}))} - C \leq \|\mathcal{I}_{\mathbb{S}^{2}}(t,P_{\leq N}u_{\alpha}^{\omega})\|_{L^{2}(\Omega; H^{\alpha-1}(\mathbb{S}^{2}))}\,.
\end{equation}
It remains to show the divergence of the term on the left-hand-side. 
\subsection{Proof of the divergence claimed in Theorem~\ref{th:main}}

In order to demonstrate the divergence asserted in Theorem~\ref{th:main}, we exploit the concentration property associated with high-order spherical harmonics, as written in Proposition~\ref{prop:concentration}.
\begin{proposition}
\label{prop:II}
There exist $\eta>0$ and $N_0\geq 0$ such that for all $N\geq N_0$ and $t\in\R$ we have
\begin{equation}
\label{eq:propII}
|t|\eta\log(N)^{\frac{1}{2}}
	\leq
	\|\operatorname{II}_N(t)
	\|_{L^2(\Omega ; H^{\alpha-1}(\mathbb{S}^{2}))}\,.
\end{equation}

\end{proposition}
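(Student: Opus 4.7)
The plan is to isolate inside $\operatorname{II}_N(t)$ the resonant contribution, coming from the pairs with $n_0 = n_1$, where the Duhamel factor $\frac{e^{-it\Omega(\vec n)}-1}{\Omega(\vec n)}$ collapses to $-it$ (on $\Gamma_N^{(2)}$ the resonance simplifies to $\Omega(\vec n) = \lambda_{n_0}^2 - \lambda_{n_1}^2$ since $n_2 = n_3$). I would write $\operatorname{II}_N = \operatorname{II}_N^{\mathrm{res}} + \operatorname{II}_N^{\mathrm{nr}}$ with
\[
\operatorname{II}_N^{\mathrm{res}}(t,x) = -it \sum_{\substack{n_1, n_2 \leq N \\ n_1 \neq n_2}} (\lambda_{n_1}\lambda_{n_2}^2)^{-(\alpha-\frac{1}{2})} \pi_{n_1}\!\Big(e_{n_1}^\omega(x)\, f_{n_2}^\omega(x)\Big)\,,
\]
where $f_{n_2}^\omega(x) := |e_{n_2}^\omega(x)|^2 - \|e_{n_2}^\omega\|_{L^2}^2$. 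The non-resonant piece $\operatorname{II}_N^{\mathrm{nr}}$ inherits the decay $|(e^{-it\Omega}-1)/\Omega| \leq 2/|\Omega|$ with $|\Omega(\vec n)| = |n_0 - n_1|(n_0+n_1+1) \geq \max(n_0, n_1)$ for $n_0 \neq n_1$; via a Wick expansion in the spirit of Lemma~\ref{lem:iso}, combined with the Sogge bound~\eqref{eq:Sogge} and the summability of $\sum \lambda_n^{-2(\alpha-\frac{1}{2})}$ valid for $\alpha > \frac{1}{2}$, I expect to establish $\|\operatorname{II}_N^{\mathrm{nr}}\|_{L^2(\Omega;H^{\alpha-1}(\mathbb{S}^2))} \leq C$ uniformly in $N$. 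By the triangle inequality it then suffices to prove $\mathbb{E}\|\operatorname{II}_N^{\mathrm{res}}(t)\|_{H^{\alpha-1}}^2 \gtrsim t^2 \log N$.

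For the resonant part, the eigenspaces $\mathcal{E}_{n_1}$ are mutually orthogonal in $H^{\alpha-1}$, and a direct bookkeeping of the $\lambda$-weights ($\lambda_{n_1}^{2(\alpha-1)-2(\alpha-\frac{1}{2})} = \lambda_{n_1}^{-1}$) yields
\[
\mathbb{E}\|\operatorname{II}_N^{\mathrm{res}}(t)\|_{H^{\alpha-1}}^2 = t^2 \sum_{n_1 \leq N} \lambda_{n_1}^{-1} \, \mathbb{E}\|\pi_{n_1}(e_{n_1}^\omega F_{n_1}^\omega)\|_{L^2}^2\,,\quad F_{n_1}^\omega := \sum_{\substack{n_2 \leq N \\ n_2 \neq n_1}} \lambda_{n_2}^{-2(\alpha-\frac{1}{2})}\, f_{n_2}^\omega\,.
\]
Since $\sum_{n_1 \leq N} \lambda_{n_1}^{-1} \sim \log N$, the goal reduces to a uniform lower bound $\mathbb{E}\|\pi_{n_1}(e_{n_1}^\omega F_{n_1}^\omega)\|_{L^2}^2 \geq c > 0$ for all $n_1$ large enough. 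Cross terms with $n_2 \neq n_2'$ in the expansion of $\|\pi_{n_1}(e_{n_1}^\omega F_{n_1}^\omega)\|^2$ vanish after taking expectation, by independence of $f_{n_2}, f_{n_2'}$ and the fact that $\mathbb{E} f_{n_2} = 0$, so it is enough to prove $\mathbb{E}\|\pi_{n_1}(e_{n_1}^\omega f_{n_2}^\omega)\|_{L^2}^2 \geq c>0$ for one convenient fixed $n_2$.

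This is where Proposition~\ref{prop:concentration} enters. Working in the basis $\{Y_{n_1, k}\}_{|k|\leq n_1}$ of $\mathcal{E}_{n_1}$ and using the independence of $e_{n_1}^\omega$ and $f_{n_2}^\omega$, conditioning on $f_{n_2}^\omega$ and retaining only the diagonal contributions gives
\[
\mathbb{E}\|\pi_{n_1}(e_{n_1}^\omega f_{n_2}^\omega)\|_{L^2}^2 \geq \frac{1}{2n_1+1} \sum_{|k|\leq n_1} \mathrm{Var}\!\Big(\int_{\mathbb{S}^2} f_{n_2}^\omega(x)\, |Y_{n_1, k}(x)|^2\, \mathrm{d}\sigma(x)\Big)\,.
\]
I would fix $n_2 = 1$ and restrict the $k$-sum to the high-weight range $|k|^2 \geq n_1(n_1+1)(1-\delta^2)$. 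Proposition~\ref{prop:concentration} ensures that each such $|Y_{n_1,k}|^2$ puts mass $O(1/n_1^2)$ outside the equator strip $\{|\cos\theta| \leq 2\delta\}$. Using the explicit formulas $Y_{1,0} = \sqrt{3}\cos\theta$ and $Y_{1,\pm 1}\propto \sin\theta\, e^{\pm i\varphi}$, a direct computation shows that $\int f_1^\omega |Y_{n_1, k}|^2\, \mathrm{d}\sigma$ is, up to an error $O(\delta) + O(1/n_1^2)$, a non-degenerate linear combination of $|g_{1,-1}|^2-1$, $|g_{1,0}|^2-1$, $|g_{1,1}|^2-1$: the coefficient of $|g_{1,0}|^2-1$ is close to $-1$ (since $Y_{1,0}^2$ vanishes on the equator) while those of $|g_{1,\pm 1}|^2-1$ are close to $1/2$, so the variance is bounded below by a positive $c(\delta) > 0$ uniformly for $n_1$ large. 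Since there are $\gtrsim n_1 \delta^2$ admissible $k$'s, summing and dividing by $2n_1+1$ produces the required uniform lower bound $\mathbb{E}\|\pi_{n_1}(e_{n_1}^\omega f_1^\omega)\|_{L^2}^2 \gtrsim \delta^2$.

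The main obstacle is the fine quantitative implementation of Proposition~\ref{prop:concentration} inside the variance computation: one must carefully balance the $O(1/n_1^2)$ tail mass of $|Y_{n_1, k}|^2$ outside the strip and the smoothness error $O(\delta)$ within it, ensuring that neither degrades the positivity of the variance after summing over $\sim n_1 \delta^2$ admissible $k$'s. A secondary technical point is the uniform control of $\operatorname{II}_N^{\mathrm{nr}}$: although the Duhamel denominator $1/|\Omega(\vec n)|$ always provides a gain, the near-resonant sums with $|n_0 - n_1| = O(1)$ contribute barely-convergent pieces that require careful bookkeeping of the Wick pairings, in the same spirit as the proof of Lemma~\ref{lem:III}.
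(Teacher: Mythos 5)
Your resonant-core computation parallels the paper's argument closely: after orthogonality in $n_1$, the weight bookkeeping $\lambda_{n_1}^{2(\alpha-1)-2(\alpha-\frac12)}=\lambda_{n_1}^{-1}$, the restriction to $n_2=1$ and the diagonal $k_1=k$, you arrive at the same reduced quantity $\sum_{n_1}n_1^{-2}\sum_{|k|\leq n_1}\mathbb{E}\big|\int f_1^\omega|Y_{n_1,k}|^2\,d\sigma\big|^2$ as in \eqref{eq:II3}, and you correctly invoke Proposition~\ref{prop:concentration} on the high-weight range to extract the $\gtrsim n_1\delta^2$ admissible modes. Your variance computation (the non-degenerate linear combination $-\big(|g_{1,0}|^2-1\big)+\tfrac12\big(|g_{1,1}|^2-1\big)+\tfrac12\big(|g_{1,-1}|^2-1\big)$, up to $O(\delta^2)+O_\delta(n_1^{-2})$) is a legitimate alternative to the paper's restriction to the event $S_\epsilon=\{|g_{1,1}|^2\geq1,\ |g_{1,0}|^2+|g_{1,-1}|^2\leq\epsilon\}$; both deliver the uniform positive lower bound on the single-mode contribution. (Small corrections: the interior error is $O(\delta^2)$, not $O(\delta)$, since $3\cos^2\theta\leq12\delta^2$ on the strip; and the exterior tail is $O_\delta(n^{-2})$ with a $\delta$-dependent constant, so one must fix $\delta$ first and then take $n$ large, as the paper does.)

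The real issue is your resonant/non-resonant split followed by the triangle inequality. You have relocated the difficulty to establishing $\|\operatorname{II}_N^{\mathrm{nr}}\|_{L^2(\Omega;H^{\alpha-1})}=O(1)$, a borderline sum over the near-resonant shells $|n_0-n_1|=O(1)$ that you flag as expected but do not prove. This is not a secondary technicality: without it, divergence of $\operatorname{II}_N^{\mathrm{res}}$ alone says nothing about $\operatorname{II}_N$, since a priori the two pieces could cancel. The paper never runs into this, because it takes the expectation of $\|\operatorname{II}_N\|_{H^{\alpha-1}}^2$ \emph{before} any splitting: after Plancherel in $n_0$ and the Wick decoupling (independence of the $g_{n,k}$, and $\mathbb{E}f_{n_2}=0$), the quantity $\mathbb{E}\|\operatorname{II}_N(t)\|_{H^{\alpha-1}}^2$ collapses into a single sum of \emph{nonnegative} terms indexed by $(n_0,k,n_1,k_1,n_2)$, namely \eqref{eq:II1}--\eqref{eq:II2}. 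One then simply discards all but the $n_0=n_1$, $k=k_1$, $n_2=1$ terms to obtain a lower bound; the non-resonant contribution can only add to the sum and never needs to be estimated. In fact, even within your decomposition this observation would rescue you: a Wick parity count shows $\mathbb{E}\langle\operatorname{II}_N^{\mathrm{res}},\operatorname{II}_N^{\mathrm{nr}}\rangle_{H^{\alpha-1}}=0$ (the degree-$(1,0)$ factor $e_{n_1}$ with $n_1=n_0$ cannot be fully paired against $\overline{e_{n_1'}}f_{n_2'}$ with $n_1'\neq n_0$ under the constraints $n_1\neq n_2$, $n_1'\neq n_2'$), so $\mathbb{E}\|\operatorname{II}_N\|^2=\mathbb{E}\|\operatorname{II}_N^{\mathrm{res}}\|^2+\mathbb{E}\|\operatorname{II}_N^{\mathrm{nr}}\|^2\geq\mathbb{E}\|\operatorname{II}_N^{\mathrm{res}}\|^2$ with no triangle inequality and no need to bound $\operatorname{II}_N^{\mathrm{nr}}$ at all. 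Replacing your step ``by the triangle inequality it suffices\ldots'' with the positivity observation closes the gap and also yields the clean lower bound $|t|\eta(\log N)^{1/2}$ for \emph{all} $t$ without the additive constant your subtraction would otherwise introduce.
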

\begin{proof}
Recall that we defined in \eqref{eq:II}
\[
\mathrm{II}_{N}(t,x)
	= \sum_{\vec{n}\in\Gamma_{N}^{(2)}}
	\big(\frac{\e^{-it\Omega(\vec{n})}-1}{\Omega(\vec{n})}
	\big)(\lambda_{n_{1}}\lambda_{n_{2}}^{2})^{-(\alpha-\frac{1}{2})}\pi_{n_{0}}
	\big(e_{n_{1}}^{\omega}(|e_{n_{2}}^{\omega}|^{2}-\|e_{n_{2}}^{\omega}\|_{L^{2}\mathbb{S}^{2}}^{2})
	\big)\,.
\]
Note that when $\vec{n}=(n,n_{1},n_{2},n_{3})\in\Gamma_{N}^{(2)}$ then the resonant function is $\Omega(\vec{n})=\lambda_{n}^{2}-\lambda_{n_{1}}^{2}$. We expand $\mathrm{II}_{N}(t)$ on the orthonormal basis of $L^2(\mathbb{S}^{2})$ made of spherical harmonics : 
\begin{multline*}
\mathrm{II}_{N}(t)=
\sum_{n\lesssim N}\sum_{|k|\leq n}Y_{n,k}\\
	\sum_{n_1,n_2\leq N}
	\mathbf{1}_{n_{1}\neq n_{2}}
	(\lambda_{n_{1}}
	\lambda_{n_{2}}^{2})^{-(\alpha-\frac{1}{2})}
	\frac{e^{-it(\lambda_{n}^2-\lambda_{n_1}^2)}-1}
	{\lambda_{n}^2-\lambda_{n_1}^2}
		\big(
	\int_{\mathbb{S}^{2}}
	e_{n_{1}}^{\omega}(x)
	(|e_{n_2}^\omega(x)|^2-\|e_{n_2}^\omega\|_{L^2(\mathbb{S}^{2})}^2)
	\overline{Y_{n,k}(x)}\mathrm{d}\sigma(x)
	\big)
	\,.
\end{multline*}

\begin{remark} Note that since we truncated the initial data at frequency $N$, we can deduce from degree considerations on the spherical harmonics that only the modes with $n\lesssim N$ contribute to the above sum. 
\end{remark}

By applying the Plancherel's formula and using the fact that $\e^{it(\Delta_{\mathbb{S}^2}-1)}$ is a unitary operator on $L^2(\mathbb{S}^{2})$, we obtain
\begin{multline}
\label{eq:II1}
	\|
	\operatorname{II}_{N}(t)
	\|_{H^{\alpha-1}(\mathbb{S}^{2})}^2
	= 
	\sum_{n\lesssim N}
	\lambda_{n}^{2(\alpha-1)}
	\sum_{|k|\leq n}
	\Big|
	\sum_{n_1,n_2\leq N}\mathbf{1}_{n_{1}\neq n_{2}}
	(\lambda_{n_{1}}
	\lambda_{n_{2}}^{2})^{-(\alpha-\frac{1}{2})}
	\frac{e^{-it(\lambda_{n}^2-\lambda_{n_1}^2)}-1}{\lambda_{n}^2-\lambda_{n_1}^2} \\
		\big(
	\int_{\mathbb{S}^{2}}
	e_{n_{1}}^{\omega}(x)
	(|e_{n_2}^\omega(x)|^2-\|e_{n_2}^\omega\|_{L^2(\mathbb{S}^{2})}^2)
	\overline{Y_{n,k}(x)}\mathrm{d}\sigma(x)
	\big)
	\Big|^2.
\end{multline}
By expressing $e_{n_{1}}^{\omega}$ in the basis made of spherical harmonics $(Y_{n_{1},k_{1}})$ (recall that the law of $e_{n_{1}}^{\omega}$ does not depend on the choice of the orthonormal basis), one deduces from the independence and from~\eqref{eq:zero-g} that
\begin{multline}
\label{eq:II2}
\mathbb{E}\Big|
	\sum_{n_1,n_2\leq N}
	\mathbf{1}_{n_{1}\neq n_{2}}
	(\lambda_{n_{1}}
	\lambda_{n_{2}}^{2})^{-(\alpha-\frac{1}{2})}
	\frac{e^{-it(\lambda_{n}^2-\lambda_{n_1}^2)}-1}{\lambda_{n}^2-\lambda_{n_1}^2}
		\big(
	\int_{\mathbb{S}^{2}}
	e_{n_{1}}^{\omega}(x)
	(|e_{n_2}^\omega(x)|^2-\|e_{n_2}^\omega\|_{L^2(\mathbb{S}^{2})}^2)
	\overline{Y_{n,k}(x)}\mathrm{d}\sigma(x)
	\big)
	\Big|^2
\\
	= \sum_{n_{1},n_{2}\leq N} \mathbf{1}_{n_{1}\neq n_{2}}
	(\lambda_{n_{1}}\lambda_{n_{2}}^{2})^{-2(\alpha-\frac{1}{2})}(2n_{1}+1)^{-1}
	\big|\frac{e^{-it(\lambda_{n}^2-\lambda_{n_1}^2)}-1}{\lambda_{n}^2-\lambda_{n_1}^2}
	\big|^{2}
	\\
	\sum_{|k_{1}|\leq n_{1}}
	\mathbb{E}
	\Big|
	\int_{\mathbb{S}^{2}}
	(|e_{n_{2}}^{\omega}(x)|^{2}-\|e_{n_{2}}^{\omega}\|_{L^{2}(\mathbb{S}^{2})}^{2})
	g_{n_{1},k_{1}}(\omega)
	Y_{n_{1},k_{1}}(x)
	\overline{Y_{n,k}(x)}
	\mathrm{d}\sigma(x)
	\Big|^{2}
\end{multline}
Let us justify why only the terms with $n_{1}=n_{1}'$ and $n_{2}=n_{2}'$ contributed to the double sum over $(n_{1},n_{1}',n_{2},n_{2}')$.  Fix $(x,y)\in(\mathbb{S}^{2})^{2}$. We distinguish three cases: 
\medskip

\noindent {\bf Case 1: $n_{1}\neq n_{1}'$ and $n_{2}\neq n_{2}'$}. Since, by assumption, $n_{1}'\neq n_{2}'$ and $n_{1}\neq n_{2}$, we have from the independence of $\mathcal{B}_{n_{1}}$ and $\mathcal{B}_{n_{2}'}$ from $\mathcal{B}_{n_{1}'}$ and $\mathcal{B}_{n_{2}}$, that 
\begin{multline*}
\mathbb{E}\Big[
		e_{n_{1}}^{\omega}(x)
		\big(
		|e_{n_{2}}^{\omega}(x)|^{2}-\|e_{n_{2}}^{\omega}\|_{L^{2}(\mathbb{S}^{2})}^{2}
		\big)
		\overline{e_{n_{1}'}^{\omega}(y)}
		\big(
		|e_{n_{2}'}^{\omega}(y)|^{2}-\|e_{n_{2}'}^{\omega}\|_{L^{2}(\mathbb{S}^{2})}^{2}
		\big)
		\Big]
		\\
		=
		\mathbb{E}\Big[
		e_{n_{1}}^{\omega}(x)\big(
		|e_{n_{2}'}^{\omega}(y)|^{2}-\|e_{n_{2}'}^{\omega}\|_{L^{2}(\mathbb{S}^{2})}^{2}
		\big)
		\Big]
		\mathbb{E}\Big[
		\overline{e_{n_{1}'}^{\omega}(y)}
		\big(
		|e_{n_{2}}^{\omega}(x)|^{2}-\|e_{n_{2}}^{\omega}\|_{L^{2}(\mathbb{S}^{2})}^{2}
		\big)
		\Big]
		= 0\,,
\end{multline*}
where we used the identity \eqref{eq:zero-g}. 
\medskip

\noindent{\bf Case 2: $n_{1}\neq n_{1}'$ and $n_{2}=n_{2}'$}. In this case, we have from the independence of $\mathcal{B}_{n_{1}}$ from~$\mathcal{B}_{n_{1}'}$ and~$\mathcal{B}_{n_{2}}$ that 
\begin{multline*}
\mathbb{E}\Big[
		e_{n_{1}}^{\omega}(x)\big(
		|e_{n_{2}}^{\omega}(y)|^{2}-\|e_{n_{2}}^{\omega}\|_{L^{2}(\mathbb{S}^{2})}^{2}
		\big)
		\overline{e_{n_{1}'}^{\omega}(y)}
		\big(
		|e_{n_{2}}^{\omega}(y)|^{2}-\|e_{n_{2}}^{\omega}\|_{L^{2}(\mathbb{S}^{2})}^{2}
		\big)
		\Big] \\
		= 
		\mathbb{E}[
		e_{n_{1}}^{\omega}(x)
		]
		\mathbb{E}\Big[
		\big(
		|e_{n_{2}}^{\omega}(y)|^{2}-\|e_{n_{2}}^{\omega}\|_{L^{2}(\mathbb{S}^{2})}^{2}
		\big)
		\overline{e_{n_{1}'}^{\omega}(y)}\big(
		|e_{n_{2}}^{\omega}(y)|^{2}-\|e_{n_{2}}^{\omega}\|_{L^{2}(\mathbb{S}^{2})}^{2}
		\big)
		\Big]
		= 0\,,
\end{multline*}
where we used that $e_{n_{1}}^{\omega}(x)$ has zero average. 
\medskip

\noindent {\bf Case 3: $n_{1}=n_{1}'$ and $n_{2}\neq n_{2}'$.} By assumption, $n_{1}\neq n_{2}$ and we deduce from the independence of~$\mathcal{B}_{n_{2}}$ from~$\mathcal{B}_{n_{1}}$ and~$\mathcal{B}_{n_{2}'}$ that 
\begin{multline*}
\mathbb{E}\Big[
		e_{n_{1}}^{\omega}(x)
		\big(
		|e_{n_{2}}^{\omega}(x)|^{2}-\|e_{n_{2}}^{\omega}\|_{L^{2}(\mathbb{S}^{2})}^{2}
		\big)
		\overline{e_{n_{1}}^{\omega}(y)}
		\big(
		|e_{n_{2}'}^{\omega}(y)|^{2}-\|e_{n_{2}'}^{\omega}\|_{L^{2}(\mathbb{S}^{2})}^{2}
		\big)
		\Big]
		\\
		=
		\mathbb{E}\Big[
		|e_{n_{2}}^{\omega}(x)|^{2}-\|e_{n_{2}}^{\omega}\|_{L^{2}(\mathbb{S}^{2})}^{2}
		\Big]
		\mathbb{E}\Big[
		e_{n_{1}}^{\omega}(x)\overline{e_{n_{1}}^{\omega}(y)}
		\big(
		|e_{n_{2}'}^{\omega}(y)|^{2}-\|e_{n_{2}'}^{\omega}\|_{L^{2}(\mathbb{S}^{2})}^{2}
		\big)
		\Big]
		= 0\,, 
\end{multline*}
where we used that the first term in the second line vanishes according to \eqref{eq:zero-g}. 
\medskip

This proves that only $n_{1}=n_{1}'$ and $n_{2}=n_{2}'$ contributed to the double sum. 

\medskip

We now estimate from below \eqref{eq:II2}, keeping only the $high\ \times\ low$ frequency interactions, namely when $n_{2}=1$, which are resonant at $n=n_{1}$, and with $k=k_{1}$. Uniformly in $n,k$ with $n\geq2$, we have:
\begin{align*}
\eqref{eq:II2}
	&\gtrsim t^{2} n^{-1}
	\lambda_{n}^{-2(\alpha-\frac{1}{2})}\mathbb{E}
	\Big|
	\int_{\mathbb{S}^{2}}
	\big(|e_{1}^{\omega}(x)|^{2} - \|e_{1}^{\omega}\|_{L^{2}(\mathbb{S}^{2})}^{2}
	\big)
	|Y_{n,k}(x)|^{2}
	\mathrm{d}\sigma(x)\Big|^{2} \\
	&\sim t^{2} n^{-2}\lambda_{n}^{-2(\alpha-1)}
	\mathbb{E}\Big|
	\int_{\mathbb{S}^{2}}
	\big(|e_{1}^{\omega}(x)|^{2} - \|e_{1}^{\omega}\|_{L^{2}(\mathbb{S}^{2})}^{2}
	\big)
	|Y_{n,k}(x)|^{2}
	\mathrm{d}\sigma(x)\Big|^{2}\,,
\end{align*}
where we used that $\lambda_{n}\sim cn$.  We deduce from \eqref{eq:II} and \eqref{eq:II2} that 
\begin{equation}
\label{eq:II3}
\mathbb{E}\|\operatorname{II}_{N}(t)\|_{H^{\alpha-1}}^{2}
	\gtrsim 
	t^{2}\sum_{2\leq n\lesssim N}n^{-2}\sum_{|k|\leq n}
		\mathbb{E}\Big|
	\int_{\mathbb{S}^{2}}
	\big(|e_{1}^{\omega}(x)|^{2} - \|e_{1}^{\omega}\|_{L^{2}(\mathbb{S}^{2})}^{2}
	\big)
	|Y_{n,k}(x)|^{2}
	\mathrm{d}\sigma(x)\Big|^{2}\,.
\end{equation}
Then, we use the explicit expression of $e_1^\omega$, and we remove a well-chosen subset from $\Omega$ (such that the remainder still has positive measure) in order to exploit the concentration of $Y_{1,1}$ near the equator. We have, for $\omega\in\Omega$ and $x\in\mathbb{S}^{2}$, 
\[
e_1^\omega(x) 
	= \frac{1}{\sqrt{3}}
	\big(g_{1,1}(\omega)Y_{1,1}(x)+g_{1,-1}(\omega) Y_{1,-1}(x)+g_{1,0}(\omega) Y_{1,0}(x)
	\big)
\,,
\]
According to \eqref{eq:sh}, we can express in spherical coordinates
\[
Y_{1,1}(\theta,\varphi) = - \sqrt{\frac{3}{2}}\sin(\theta)\e^{i\varphi}\,. 
\]
Then, given $0<\epsilon\ll 1$ we define
\[
S_{\epsilon} = \set{\omega\in\Omega \mid 1\leq |g_{1,1}(\omega)|^2\leq100,\quad 
\abs{g_{1,-1}(\omega)}^2+\abs{g_{1,0}(\omega)}^2\leq\epsilon^{2}}.
\]
Note that $\mathbb{P}(S_{\epsilon})>0$. We write 
\begin{equation*}
|e_{1}^{\omega}(x)|^{2}-\|e_1^\omega\|_{L^2(\mathbb{S}^{2})}^2
	=\frac{1}{3}|g_{1,1}(\omega)|^2
	(\frac{3}{2}\sin^{2}(\theta)-1
	)
	+r(x,\omega)\,,
\end{equation*}
where there exists $C>0$ such that function $r$ satisfies for all $x\in\mathbb{S}^{2}$ and $\omega\in S_{\epsilon}$,
\begin{equation}
\label{eq:epsilon}
|r(x,\omega)|\leq C\epsilon\,.
\end{equation}
We observe that for all $\delta>0$, 
\begin{equation}
\label{eq:12}
x=(\theta,\varphi)\in\mathbb{S}^{2}\setminus\mathscr{C}_{2\delta}
	\quad \implies\quad
	 \frac{3}{2}\sin^{2}(\theta)-1\geq \frac{1}{2}(1-12\delta^{2})\,,
\end{equation}
where $\mathscr{C}_{\delta}$ is defined in \eqref{eq:Cdelta}. Fix $\delta>0$, $\omega\in S_{\epsilon}$ and set 
\begin{align*}
	a_{n,k}(\omega,\delta) &=
		\frac{1}{3}
		|g_{1,1}(\omega)|^2 \int_{\mathbb{S}^{2}}(1-\mathbf{1}_{\mathscr{C}_{2\delta}}(x)) 
	(\frac{3}{2}\sin^{2}(\theta)-1
	)
	\abs{Y_{n,k}(x)}^2
	\mathrm{d}\sigma(x)\\
	 \quad b_{n,k}(\omega,\delta) 
	 &=\frac{1}{3}
	 |g_{1,1}(\omega)|^2 \int_{\mathbb{S}^{2}}\mathbf{1}_{\mathscr{C}_{2\delta}}(x) 
	(\frac{3}{2}\sin^{2}(\theta)-1
	)
	\abs{Y_{n,k}(x)}^2
	\mathrm{d}\sigma(x)
	+
	 \int_{\mathbb{S}^{2}}r(x,\omega)\abs{Y_{n,k}(x)}^2
	 \mathrm{d}\sigma(x)\,.
\end{align*}
so that 
\[
\int_{\mathbb{S}^{2}}
	\big(|e_{1}^{\omega}(x)|^{2} - \|e_{1}^{\omega}\|_{L^{2}(\mathbb{S}^{2})}^{2}
	\big)
	|Y_{n,k}(x)|^{2}
	\mathrm{d}\sigma(x) = a_{n,k}(\omega,\delta) + b_{n,k}(\omega,\delta)\,.
\]
Then, we deduce from the inequality $ (a+b)^{2}\geq \frac{a^{2}}{2}-b^{2}$ (for $a,b\in\R$) that for all $\omega, n, k$,
\begin{multline}
\label{eq:II4}
	\mathbb{E}
	\Big[\mathbf{1}_{S_{\epsilon}}(\omega)
	\big|
	\int_{\mathbb{S}^{2}}
	\big(\abs{e_1^\omega(x)}^2-\|e_1^\omega\|_{L^2(\mathbb{S}^{2})}
	\big)|Y_{n,k}(x)|^2
	\mathrm{d}\sigma(x)
	\big|^{2}\Big]
	\\
	\geq \frac{1}{2}\mathbb{E}[\mathbf{1}_{S_{\epsilon}}(\omega)a_{n,k}(\omega)^{2}] - \mathbb{E}[\mathbf{1}_{S_{\epsilon}}(\omega)b_{n,k}(\omega)^{2}]\,.
\end{multline}
Moreover, for all $\epsilon,\delta>0$, $n,k$ and $\omega\in S_{\epsilon}$ we have form \eqref{eq:12} that 
\begin{align*}
a_{n,k}(\omega,\delta) 
	&\geq
	\frac{1}{6} |g_{1,1}(\omega)|^{2} (1-12\delta^{2})
	\int_{\mathbb{S}^{2}}
	(1-\mathbf{1}_{\mathscr{C}_{2\delta}}(x))|Y_{n,k}(x)|^{2}
	\mathrm{d}\sigma(x)\,. 
\intertext{On the other hand, it follows from \eqref{eq:epsilon} and from the fact that $\frac{3}{2}\sin(\theta)^{2}-1\leq\frac{1}{2}$ when $x=(\theta,\varphi)\in\mathscr{C}_{\epsilon}$, that }
|b_{n,k}(\omega,\delta)| 
	&\leq
	\frac{1}{12}|g_{1,1}(\omega)|^{2}\int_{\mathbb{S}^{2}}\mathbf{1}_{\mathscr{C}_{2\delta}}(x)|Y_{n,k}(x)|^{2}
	\mathrm{d}\sigma(x)
	+ 
	C\epsilon\,.
\end{align*}

\medskip

In section~\ref{section:equator}, we prove that in the regime where $n$ is large and $ n(n+1)(1-\delta^{2})\leq k^{2}\leq n^{2}$ (see \eqref{eq:k}) the spherical harmonics $\set{Y_{n,k}}$ concentrate their mass near the equator, precisely in the region outside $\mathscr{C}_{2\delta}$. Applying Proposition~\ref{prop:concentration}, we obtain that there exits $C_{\delta}$ such that for all $\delta>0$, $\omega\in S_{\epsilon}$, $n\geq2$ and $k$ as in \eqref{eq:k}, 
\begin{align*}
a_{n,k}(\omega)
	&\geq \frac{1}{6}|g_{1,1}(\omega)|^{2}(1-12\delta^{2})
		- \frac{C_{\delta}}{n}|g_{1,1}(\omega)|^{2}\,,
	\intertext{and}
|b_{n,k}(\omega)|
	&\leq
	 \frac{C_{\delta}}{n}|g_{1,1}(\omega)|^{2} + C\epsilon\,.
\end{align*} 
Let $\delta>0$ and $N\in\N$. We set 
\[
\Lambda_{N,\delta} := \{(n,k)\in\N^{2}\,,\quad N\leq n\,,\quad n(n+1)(1-\delta^{2})\leq k^{2}\leq n^{2}\}\,.
\]
We conclude from \eqref{eq:II4} that, when $\delta>0$ is sufficiently small, there exists $N_{\delta}\geq2$ such that for all $(n,k)\in\Lambda_{N_{\delta},\delta}$ and  $\epsilon>0$ sufficiently small,
\begin{multline*}
\mathbb{E}\Big|
	\int_{\mathbb{S}^{2}}
	\big(|e_{1}^{\omega}(x)|^{2} - \|e_{1}^{\omega}\|_{L^{2}(\mathbb{S}^{2})}^{2}
	\big)
	|Y_{n,k}(x)|^{2}
	\mathrm{d}\sigma(x)\Big|^{2}
	\\
	\geq 
	\mathbb{E}
	\Big[\mathbf{1}_{S_{\epsilon}}(\omega)
	\big|
	\int_{\mathbb{S}^{2}}
	\big(|e_{1}^{\omega}(x)|^{2} - \|e_{1}^{\omega}\|_{L^{2}(\mathbb{S}^{2})}^{2}
	\big)
	|Y_{n,k}(x)|^{2}
	\mathrm{d}\sigma(x)\big|^{2}
	\Big]
	\geq \frac{1}{100}\mathbb{P}(S_{\epsilon})\,.
\end{multline*}
Plugging the above estimate in \eqref{eq:II3} we conclude there exist $\delta>0$ and $N_{\delta}$ such that for all $N\geq N_{\delta}$ and $\epsilon>0$ small enough, 
\begin{align*}
\mathbb{E}\|\operatorname{II}_{N}(t)\|_{H^{\alpha-1}(\mathbb{S}^{2})}^{2} \gtrsim t^{2}
	\sum_{N_{\delta}\leq n\leq N}n^{-2}
	\#\{k\ :\ (n,k)\in\Lambda_{N_{\delta},\delta}\}
	\frac{1}{100}\mathbb{P}(S_{\epsilon}) \gtrsim_{\delta,\epsilon} t^{2}\log(N)\,.
\end{align*}
At this stage, $\delta$ and $\epsilon$ are small but fixed positive constants. This completes the proof of Proposition~\ref{prop:II}. 
\end{proof}
\subsection{Conclusion}

In order to complete the proof of Theorem \ref{th:main}, it suffices to inject~\eqref{eq:propII} into~\eqref{eq:redu}. This yields the existence of $C,\eta>0$ and a sufficiently large integer $N_{0}$ such that for all $t$ and for all $N\geq N_{0}$, 
\[
\|\mathcal{I}_{\mathbb{S}^{2}}(t,P_{\leq N}u_{\alpha}^{\omega})\|_{L^{2}(\Omega;H^{\alpha-1}(\mathbb{S}^{2}))}
	\geq |t|\eta \log(N)^{\frac{1}{2}} - C
\]
Taking $N_{0}$ large enough compared to $C$ gives 
\[
\|\mathcal{I}_{\mathbb{S}^{2}}(t,P_{\leq N}u_{\alpha}^{\omega})\|_{L^{2}(\Omega;H^{\alpha-1}(\mathbb{S}^{2}))}
	\geq |t|\frac{\eta}{2}\log(N)^{\frac{1}{2}}\,.
\]
This completes the proof of Theorem \ref{th:main}.
\section{Regularity of the first iteration on tori}
\label{sec:appendix}

In this appendix, we consider the cubic NLS on the general torus \[\mathbb{T}_{\beta}^2=\R^2/(2\pi\Z)^2\,,\]
endowed with the metric  $g=dx_1^2+\beta^{-2}dx_2^2$ for some $\beta>0$. The cubic NLS on $\T_{\beta}^2$ is written as
\begin{align}\label{NLSirrational}
 i\partial_{t}v+\Delta_{\beta}v=|v|^2v,
\end{align}
where  $\Delta_{\beta}=\partial_{x_1}^2+\beta^2\partial_{x_2}^2$. Hence from now on we will only concentrate on \eqref{NLSirrational}. For $n=(k,m),n'=(k',m')\in\Z^2$, we denote by $Q(n,n')=kk'+\beta^2 mm'$ the associate quadratic form and $Q(n):=Q(n,n)$. Then 
\[
e^{it\Delta_{\beta}}=\mathcal{F}_x^{-1}e^{-itQ(\cdot)}\mathcal{F}_x\,.
\]
In light of the Gaussian measure \eqref{eq:randomData} defined in the case of $\mathbb{S}^{2}$, we consider initial data of the form
\[\phi_{1}^\omega (x) = \sum_{n\in\Z^2}\frac{1}{\langle n \rangle}g_n^\omega\e^{in\cdot x}\,,
\]
where $\langle n\rangle =(1+Q(n))^{\frac{1}{2}}$, and
\[
u^\omega(t,x)=\sum_{n\in\Z^2}\frac{\e^{itQ(n)}}{\langle n \rangle}g_n^\omega\e^{in\cdot x}
\]
We write the Wick-ordered nonlinearity 
 \begin{align*}
 \mathcal{N}(u^\omega)(t)&= \sum_{\substack{(n_1,n_2,n_3)\in\Z^2\\n_2\neq n_1,n_3}}\frac{\e^{it(Q(n_1)-Q(n_2)+Q(n_3))}}{\langle n_1 \rangle\langle n_2 \rangle\langle n_3 \rangle}g_{n_1}(\omega)\overline{g_{n_2}(\omega)}g_{n_3}(\omega)\e^{i(n_1-n_2+n_3)\cdot x}\,\\
 &=\sum_{\substack{(n_1,n_2,n_3,n)\in\Gamma} } \frac{\e^{it\Phi(n)}}{\langle n_1 \rangle\langle n_2 \rangle\langle n_3 \rangle}g_{n_1}(\omega)\overline{g_{n_2}(\omega)}g_{n_3}(\omega) \e^{in\cdot x},
 \end{align*}
where the constraint set is 
\[\Gamma=\{(n_1,n_2,n_3,n)\in(\Z^2)^4: n=n_1-n_2+n_3, n_2\neq n_1,n_2\neq n_3 \}\,.
\]
and the resonant function $\Phi$ defined on $\Gamma$ is
\[
    \Phi(n):=Q(n_1)-Q(n_2)+Q(n_3)-Q(n_1-n_2+n_3)=2Q(n_1-n_2,n_2-n_3).
\]
The second Picard iteration is written as
\[
\mathcal{I}_{\mathbb{T}_\beta^2}(t,\phi^\omega)=\int_0^te^{i(t-t')\Delta_{\beta}}\mathcal{N}(u^\omega(t'))dt',
\]
We shall now prove the gain of regularity~\eqref{eq:FirstIterateTori} claimed in Theorem~\ref{th:main}.
\begin{proof} 
For fixed $t$, direct computation, independence assumption as well as the observation that $\mathbb{E}[g^2]=0$ for standard complex guassian functions yields that
\begin{equation}\label{iteration1-1}
    \begin{split}
    \mathbb{E}\left[\|\mathcal{I}_{\mathbb{T}_\beta^2}(t,\phi^\omega)\|_{H_x^s}^2 \right] \sim \sum_{n=(n_1,n_2,n_3)\in\Gamma}\frac{\langle n_1-n_2+n_3\rangle^{2s} }{\langle n_1\rangle^2\langle n_2\rangle^2\langle n_3\rangle^3\langle \Phi(n)\rangle^2 },
    \end{split}
\end{equation}
To compute the right side of \eqref{iteration1-1}, we decompose dyadically $|n_j|\in N_j$, and assume that $N^{(1)}\geq N^{(2)}\geq N^{(3)}$ is the non-increasing order of $N_1,N_2,N_3$. It would be sufficient to obtain an inequality of the form
\begin{align}\label{A2}
    (N_1N_2N_3)^{-2}\sum_{\substack{|n_j|\sim N_j, j=1,2,3\\
    n_2\neq n_1,n_3 } }\frac{\langle n_1-n_2+n_3\rangle^{2s} }{\langle Q(n_1-n_2,n_2-n_3) \rangle^2 }\lesssim (N^{(1)})^{-\delta}.
\end{align}
The left side of \eqref{A2} can be bounded by
\begin{align}\label{iteration1-2}
    &(N_1N_2N_3)^{-2}\sum_{l\in\Z}\frac{(N^{(1)})^{2s}}{\langle l\rangle^2}\sum_{\substack{|n_j|\in N_j\\
    n_2\neq n_1,n_3} }\mathbf{1}_{|Q(n_1-n_2,n_2-n_3)-l|\leq\delta }\\ \leq &(N_1N_2N_3)^{-2}(N^{(1)})^{2s}\sup_{ l\in\Z}\sum_{\substack{|n_j|\in N_j\\
    n_2\neq n_1,n_3} }\mathbf{1}_{|Q(n_1-n_2,n_2-n_3)-l|\leq\delta },
\end{align}
for some number $0<\delta<1$ to be fixed later.
\medskip

Let us now estimate the quantity (for fixed $l\in\Z$ and $\delta<1/4$)
\begin{align}\label{counting1}
    M_{N_1,N_2,N_3}:=\sum_{\substack{|n_j|\sim N_j\\ n_2\neq n_1,n_3}}\mathbf{1}_{|Q(n_1-n_2,n_2-n_3)-l|\leq \delta }:
\end{align}
\medskip

\noindent {\bf Case 1: $N_1\gg N_2,N_3$}. In this case, \eqref{counting1} can be majorized by
\[
    (N_2N_3)^2\sup_{n_2,n_3:n_2\neq n_3}\sum_{\substack{n_1:|n_1|\sim N_1,n_1\neq n_2 }}\mathbf{1}_{  |Q(n_1-n_2,n_2-n_3)-l|\leq\delta}.
\]
For fixed  $n_2,n_3$, we denote by $m_1=n_1-n_2, m_0=n_2-n_3$, $|m_1|\sim N_1\gg |m_0|$. It is sufficient to estimate the number of $m_1\in \Z^2, |m_1|\sim N_1$ such that $|Q(m_1,m_0)-l|\leq\delta$. Denote by $m_0=(\xi_0,\eta_0)$, if $\eta_0=0$ or $\xi_0=0$, $Q(m_1,m_0)\in\beta^2\Z$ or $Q(m_1,m_0)\in\Z$, with respectively. Then $Q(m_1,m_0)$ can only take a discrete number of values. In these situations, we have (the same dimension treatment as in \cite{BourgainTrick96})
\[ 
    \#\{m_1\in\Z^2: |m_1|\sim N_1, |Q(m_1,m_0)-l|\leq\delta \} \lesssim N_1.
\]
Therefore, without loss of generality, we may assume that both $\xi_0$ and $\eta_0$ are non zero. Denote by $m_{\beta}=(\xi_0,\beta^2\eta_0)=\textrm{diag}(1,\beta^2)m_0$. Then it is reduced to estimate the cardinality of the set
\[
    S_{\delta}:=\left\{z\in\Z^2: \left|z\cdot \frac{m_{\beta}}{|m_{\beta}|}-\frac{l}{|m_{\beta}|}\right|\leq \frac{\delta}{|m_{\beta}|} \right\}.
\]
We observe that $S_{\delta}$ is a rectangle with side length $\sim N_1$ and width $\leq 2\delta$. Since $2\delta<1/2$, we see that
\[ 
    \#S_{\delta}\lesssim N_1.
\]
Thus the contribution of~\eqref{iteration1-2} in the sum is less then $N_1^{2s-1}$, and the associated dyadic summation over $N_1\gg N_2,N_3$ converges provided that $s<\frac{1}{2}$. 
\medskip

\noindent {\bf Case 2: $N_2\gg N_1,N_3$}. In this case,  $|Q(n_1-n_2,n_2-n_3)|\sim N_2^{2}$. Coming back to \eqref{iteration1-2}, the range of the sum of $l$ is $|l|\geq N_2^2$. Hence by the crude estimate $M_{N_1,N_2,N_3}\lesssim (N_1N_2N_3)^3$, the contribution of \eqref{iteration1-2} is bounded by
\[ \sum_{|l|\geq N_2^2 }\frac{N_2^{2s}}{l^2}\lesssim N_2^{2s-2}.
\]  
Then associated dyadic summation over $N_2\gg N_1, N_3$ converges, provided that $s<1$.

\medskip

For the rest situations, the argument is the same as for {\bf Case 1}. For example, if $N_1\sim N_2\gg N_3$ ($N_2\sim N_3\gg N_1 $), we can fix $n_2,n_3$ ($n_1,n_2$), and do the same manipulation as in {\bf Case 1}.

\medskip

This completes the proof of~\eqref{eq:FirstIterateTori} in Theorem~\ref{th:main}.
\end{proof}


\end{document}